\shorttitle{Simple model of limit order book} 
\newcommand{\BZ}{{\mathbb Z}}
\newcommand{\BR}{{\mathbb R}}
\newcommand{\BE}{{\mathbb E}}
\newcommand{\BP}{{\mathbb P}}
\newcommand{\CE}{{\mathcal E}}
\newcommand{\CL}{{\mathcal L}}
\newcommand{\one}{{\mathbf 1}}
\newcommand{\ess}{\text{ess\,}}
\providecommand{\abs}[1]{\left\lvert#1\right\rvert}
\providecommand{\norm}[1]{\left\lVert#1\right\rVert}
\numberwithin{equation}{section}  
\begin{document}

\title{A simple model of a limit order book} 

\authorone[University of Michigan]{Elena Yudovina} 

\addressone{Department of Statistics\\ 439 West Hall\\ 1085 South University Ave.\\ Ann Arbor, MI 48109\\
\email{yudovina@umich.edu}} 

\begin{abstract}
We formulate a simplified model of a limit order book, in which the arrival process is independent of the current state. We prove a phase transition result: there exist prices $\kappa_b$ and $\kappa_a$ such that, for any $\epsilon > 0$, only finitely many bid (ask) departures occur at prices below $\kappa_b-\epsilon$ (above $\kappa_a+\epsilon$), while the interval $(\kappa_b+\epsilon, \kappa_a - \epsilon)$ infinitely often contains no bids, and infinitely often contains no asks. We derive expressions for $\kappa_b$ and $\kappa_a$, which we solve in the case of uniform arrivals. We conjecture the positive recurrence of a modified model, and find the steady-state distribution of the highest bid and of the lowest ask assuming the positive recurrence.
\end{abstract}

\keywords{limit order book, Lyapunov function, limiting distribution}
\ams{60J20, 91B26}{60K25} 

\section{Introduction}
A limit order book is a pricing mechanism for a single-commodity market, in which users can trade off time against price by submitting orders to be executed at a later time, once the price becomes acceptable. This mechanism is used in many financial markets, and has generated extensive research, both empirical and theoretical. We do not aim to give an overview of the field here; references can be found in the survey by Gould et al. \cite{Gould}.

While much of the research has been either empirical studies of real-world markets, or game-theoretic analysis, our approach is to consider a Markovian model. This avoids the difficulties of prescribing models of individual user behaviour by assuming certain stochastic dynamics for the market as a whole. The pioneering paper of Gode and Sunder \cite{Gode_Sunder} showed that many of the features of a market may be reproduced even with zero-intelligence traders. Our model is somewhat similar to the models considered by Cont and de Larrard \cite{Cont_deLarrard}, Cont et al. \cite{Cont_et_al}, and Simatos \cite{Simatos}; however, the set-up differs from their work because we model the arrival events as independent of the state of the system. This assumption can be interpreted as treating the system on relatively short time scales, where the price does not significantly change. We discuss this at greater length in Remark~\ref{rem: assumptions}, after formulating our model.

It is surprising that even in such a simple setting, nontrivial behaviour emerges. Specifically, we find that the system experiences a phase transition: at prices below a certain threshold, only finitely many bid orders will ever be executed; at prices above the threshold, all the queued orders will clear infinitely often. (And similarly for ask orders, of course.) The probabilistic techniques used in this paper are not difficult, and showcase the fact that our model is attractive and amenable to analysis; we outline some of the extensions that could be considered in Section~\ref{section: future}.

\subsection{Basic notation}
For a process $\xi_t$ indexed by time, $\xi_{t-}$ indicates the state of $\xi$ just before $t$. We will take all our processes to be right-continuous.

For a set $A$, $\one_A$ is the indicator function of $A$. For two sets $A$ and $B$, $A \Delta B$ is the symmetric difference $(A \setminus B) \cup (B \setminus A)$.

\section{Model}
The basic dynamics of the system are as follows. At time 0, the limit order book is empty. Limit bids and asks arrivals form two independent point processes in $\BR \times \BR_+$. Arriving orders are iid; in particular, the interarrival times, types, and prices of the arriving orders are independent of each other and of the state of the limit order book. For convenience, we will assume that the distributions of prices of arriving orders are absolutely continuous, and write $f_b$, respectively $f_a$, for the density of the price of the arriving bid, respectively ask. We write $F_b$ and $F_a$ for the cumulative density. For the interarrival times, we assume that the event ``infinitely many orders arrive; no two orders arrive at the same time; finitely many orders arrive over any finite time interval'' has probability 1. In all that follows, we work on the probability-1 event that all order arrival times and prices are distinct.

The state of the limit order book at time $t$ is the two counting measures of bid orders and ask orders present in the limit order book. (Under our assumption, this is simply the set of prices of bids and asks.) Additionally, we keep track of the highest bid price $\beta_t$ and lowest ask price $\alpha_t$ inside the limit order book. We define $\beta_t = -\infty$ if there are no bids inside the book, and similarly $\alpha_t = \infty$ if there are no asks.

The change to the limit order book that occurs upon arrival depends on the location of the price of the arriving order relative to these two prices. If the arriving order at time $t$ is an ask at price $p$, then:\\
(i) If $p < \beta_{t-}$, the newly-arrived ask causes the bid at price $\beta_{t-}$ to be executed and leave. In this case, $\beta_t < \beta_{t-}$, and the ask side is unchanged.
(ii) If $\beta_{t-} < p < \alpha_{t-}$, the newly arrived ask joins the limit order book, and $\alpha_t = p$.
(iii) If $\alpha_{t-} < p$, the newly arrived ask joins the limit order book, and $\alpha_t = \alpha_{t-}$.

Similarly, if the arriving order at time $t$ is a bid at price $q$, then:\\
(i) If $q > \alpha_{t-}$, the newly-arrived bid causes the ask at price $\alpha_{t-}$ to be executed and leave. In this case, $\alpha_t > \alpha_{t-}$, and the bid side is unchanged.
(ii) If $\alpha_{t-} > q > \beta_{t-}$, the newly arrived bid joins the limit order book, and $\beta_t = q$.
(iii) If $\beta_{t-} > q$, the newly arrived bid joins the limit order book, and $\beta_t = \beta_{t-}$.

Abandonments are not allowed. Thus, bids and asks may depart if they are executed, or remain in the limit order book forever.

It follows from these dynamics that $\beta_t < \alpha_t$ always, i.e. all the bid orders in the limit order book are to the left of all the ask orders.

\begin{remark}
A convenient way to interpret bids that arrive at prices above $\alpha_t$ is to think of all of them as \emph{market bid} orders arriving at the current best price $\alpha_t$, and similarly for asks arriving at prices below $\beta_t$. The rate at which market bid orders arrive will depend on the current lowest ask price $\alpha_t$; it will increase when $\alpha_t$ is low, and decrease when $\alpha_t$ is high (and similarly for asks). In particular, the total rate at which market orders arrive will be higher when the bid-ask spread $\alpha_t - \beta_t$ is low, and lower when it is high.
\end{remark}

\begin{remark}\label{rem: assumptions}
This model differs from a real limit order book in several important aspects, which we now discuss.

First, ignoring abandonments represents a great difference to real-world markets, where a large fraction of the orders are canceled before being executed. However, if we consider the model only over relatively short time scales, then orders that are eventually canceled may be treated as ``remaining in the system forever'', while orders that are canceled very quickly may be interpreted as background noise. Properly incorporating abandonments into our model would be difficult, because the results of Section~\ref{section: monotonicity} rely on only the best orders departing, and then doing so in bid-ask pairs.

Second, we consider the arrival process of the orders to be independent from the state of the limit order book. Allowing the arrivals to depend on the state of the book would require a better understanding of the book's shape; this is work in progress. However, over relatively short time scales, during which the price does not shift substantially, we may expect this assumption to be reasonable.

Last, we consider the arriving orders to all have size 1. Allowing non-unit-sized orders also requires a better understanding of the dynamics of the shape of the limit order book, because a large arriving order may substantially move the highest price. In particular, in binned models (defined below), a large arriving order may remove orders from several bins, as opposed to just one. Our analysis can be extended without substantial change to accommodate orders of the form ``buy $n$ units or all of the orders available at the best market price, but do not buy any orders at higher prices''.
\end{remark}

\subsection{Modifications}
We will consider a variant of the model with a finite number of price ticks, or bins. We partition $\BR$ into some number (possibly infinite) of disjoint convex nonempty subsets (i.e. points or intervals). We will consider two versions of binned models:\\
(a) Ordinary binned limit order book: the arriving bid at price $p$ is allowed to depart with the lowest ask if $p$ and $\alpha_{t-}$ fall into the same bin (even if $p < \alpha_{t-}$), or if $p > \alpha_t$, and similarly for arriving asks.\\
(a) Strict binned limit order book: the arriving bid at price $p$ is allowed to depart with the lowest ask only if $p > \alpha_{t-}$ and they fall into different bins, and similarly for arriving asks.

In the binned models, we can treat the arrival price distribution as being supported on the set of bins. However, for coupling arguments it will be convenient to think of arrivals as coming from an underlying continuous distribution on $\BR$.

Additionally, we may consider non-zero initial states of the limit order book. In particular, we may allow the initial state to have an infinite number of bids or asks at a certain price, as long as always all bids are lower than all asks. This has the interpretation of a large player in the market, who is offering infinite liquidity at some (low) buy price, and some (high) ask price. Note that if we have infinitely many bids at some price $p$, then bids below $p$ will never leave the system, while arriving asks below $p$ will always immediately depart. Thus, we may focus our attention only on the prices above $p$.

\begin{remark}[Coordinate transformation]
It will sometimes be convenient for us to change coordinates so that the bids and asks arrive on $[0,1]$, and, moreover, the bid distribution $f_b$ is uniform over $[0,1]$. This can be done, e.g., by applying the transformation $x \mapsto F_b(x)$.
\end{remark}

\section{Results}
We now state our main results.

\begin{thm}\label{thm:kappa}
For any of the variants of the limit order book discussed above, there exist deterministic constants $\kappa_b$ and $\kappa_a$ with the following properties. For any $\epsilon > 0$,
\begin{itemize}
\item $\beta_t < \kappa_b - \epsilon$ occurs only finitely many times; $\beta_t < \kappa_b + \epsilon$ occurs infinitely often. Thus, bids below $\kappa_b - \epsilon$ eventually never leave, while above $\kappa_b + \epsilon$ infinitely often there are no bids.
\item Similarly, $\alpha_t > \kappa_a + \epsilon$ occurs only finitely many times; $\alpha_t > \kappa_a - \epsilon$ occurs infinitely often. Thus, asks above $\kappa_a + \epsilon$ eventually never leave, while below $\kappa_a - \epsilon$ infinitely often there are no asks.
\end{itemize}
\end{thm}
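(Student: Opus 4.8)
The plan is to fix a price level $x\in\BR$ and reduce both bullets to a transience/recurrence dichotomy for a single integer-valued process. Set $D_t(x)=\#\{\text{bids in the book}>x\}-\#\{\text{asks in the book}<x\}$. Since all bids lie below all asks, at most one of the two counts is nonzero, so $\{\beta_t>x\}=\{D_t(x)>0\}$, $\{\alpha_t<x\}=\{D_t(x)<0\}$, and $\{\beta_t\le x\le\alpha_t\}=\{D_t(x)=0\}$ (equalities hold off a null set, as prices are a.s. distinct from $x$). Inspecting the dynamics, $D_t(x)$ increases by $1$ only at certain bid arrivals (a bid joining in $(x,\alpha_{t-})$, or a market bid clearing an ask below $x$) and decreases by $1$ only at certain ask arrivals, so it is a nearest-neighbour process on $\BZ$ with rates bounded by the total arrival rate. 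The first bullet then reads: for $x<\kappa_b$, $D_t(x)\le 0$ only finitely often (bids above $x$ eventually never leave), while for $x>\kappa_b$, $D_t(x)\le 0$ infinitely often.

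First I would define the thresholds and show they are deterministic. The event $E_x=\{\beta_t\le x\text{ i.o.}\}=\{D_t(x)\le 0\text{ i.o.}\}$ is nondecreasing in $x$, so $\kappa_b:=\inf\{x:\BP(E_x)=1\}$ and, symmetrically, $\kappa_a$ are well defined; granting $\BP(E_x)\in\{0,1\}$ for every $x$, both are deterministic, and taking $x=\kappa_b\mp\epsilon$ recovers the two halves of the bid bullet (and likewise for asks). The $0$–$1$ law is where I expect to lean on the monotonicity section: because departures occur only at the top bid/bottom ask and in bid–ask pairs, adding or deleting one initial order perturbs the book through a coupling that is eventually absorbed, so $E_x$ is a.s.\ invariant under the shift deleting the first arrival, and ergodicity of the i.i.d.\ arrival stream forces $\BP(E_x)\in\{0,1\}$.

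Next I would bracket the thresholds and prove finiteness by comparison with a state-independent random walk. Writing $\lambda_b,\lambda_a$ for the bid/ask arrival rates, whenever $D_t(x)>0$ we have $\beta_t>x$ and no asks below $x$, so the up-rate is $\lambda_b\big(F_b(\alpha_t)-F_b(x)\big)\le\lambda_b\big(1-F_b(x)\big)$ and the down-rate is $\lambda_a F_a(\beta_t)\ge\lambda_a F_a(x)$; hence the drift is at most $g(x):=\lambda_b(1-F_b(x))-\lambda_a F_a(x)$, which is strictly decreasing with a single zero $x^\ast$. Thus $g(x)<0$ forces $D_t(x)\le 0$ infinitely often, giving $\kappa_b\le x^\ast$; the symmetric bound gives $\kappa_a\ge x^\ast$, so $-\infty<\kappa_b\le x^\ast\le\kappa_a<\infty$, the lower bound $\kappa_b>-\infty$ coming from permanence at very low levels, most cleanly seen by coupling to the infinite-liquidity variant.

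The hard part, and the main obstacle, is the matching recurrence statement for $x$ strictly between $\kappa_b$ and $x^\ast$ (and symmetrically), where the crude drift bound is inconclusive: there the up-rate $\lambda_b(F_b(\alpha_t)-F_b(x))$ genuinely depends on the opposite side through $\alpha_t$, so the bids above $x$ behave like a queue whose arrival rate is throttled by $\alpha_t$ and whose service rate is $\lambda_a F_a(\beta_t)$. Pinning the true critical level requires controlling $\beta_t$ near $\kappa_b$ and $\alpha_t$ near $\kappa_a$ simultaneously, a two-way coupling I would resolve by sandwiching the process between comparison chains (one side frozen or given infinite liquidity) via the monotonicity lemmas, together with a regeneration/law-of-large-numbers argument on the excursions of $D_t(x)$. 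This coupled fixed-point structure is exactly what later yields the explicit balance equations for $\kappa_b,\kappa_a$ in the uniform case; for the present existence statement it is the step demanding the most care.
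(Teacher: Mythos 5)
Your skeleton agrees with the paper's proof up to a point: you define $\CE(x)=\{\beta_t< x \text{ i.o.}\}$, note monotonicity in $x$, set $\kappa_b=\inf\{x:\BP(\CE(x))=1\}$, and reduce the whole theorem to a zero--one law for $\CE(x)$, proved by showing insensitivity to finite perturbations of the arrival stream. But the step on which everything rests is wrong as you state it: the coupling obtained by adding or deleting a single order is \emph{not} ``eventually absorbed.'' By the paper's single-order coupling lemma, the two books can differ by one order \emph{forever} (an extra bid at a price below every ask that ever arrives is never executed), so $\CE(x)$ is not trajectorywise invariant under deleting initial arrivals, and you cannot invoke ergodicity of the i.i.d.\ stream directly. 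The paper's actual argument fills exactly this hole: deleting the first $M$ arrivals leaves books differing by at most $M$ orders (Corollary~\ref{coroll:bounded perturbations}), so $\tilde\beta_t\le x$ implies $B_t(x)\le M$; then, for $x>\ess\inf(f_a)$, each time $B_t(x)\le M$ there is a conditional probability, bounded below \emph{uniformly in $t$}, that the next $M$ arrivals are asks priced low enough to clear all bids below $x$, yielding $\beta<x$; a conditional Borel--Cantelli argument then gives $\CE(x)=\CE^M(x)$ a.s., after which $\bigcap_M\CE^M(x)$ is a tail event and Kolmogorov's zero--one law applies. The degenerate case $x<\ess\inf(f_a)$, where no bid departure is possible and the uniform-positive-probability estimate fails, must be treated separately (both events are then null). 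This clearing estimate is the heart of the proof and is entirely missing from your write-up.

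Your last two paragraphs are a detour that also reveals a structural misunderstanding. Once the zero--one law holds for every $x$, the theorem is immediate from monotonicity and the definition of $\kappa_b$: for $x<\kappa_b$ one has $\BP(\CE(x))=0$, and for $x>\kappa_b$ one picks $y\in(\kappa_b,x)$ with $\BP(\CE(y))=1$ and uses $\CE(y)\subset\CE(x)$. There is no ``matching recurrence statement for $x$ strictly between $\kappa_b$ and $x^\ast$'' to prove --- that would be needed only to \emph{identify} $\kappa_b$, which is the business of Theorem~\ref{thm:main}, not of this statement; and you leave that part as a sketch anyway. Likewise, Theorem~\ref{thm:kappa} does not assert finiteness of $\kappa_b$ or $\kappa_a$: the paper obtains finiteness only later, under extra hypotheses on the arrival distributions (Lemma~\ref{lm:finite kappa}), so your one-line claims $\kappa_b>-\infty$ and $\kappa_a<\infty$ via ``permanence at very low levels'' are both unnecessary here and unjustified in general. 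The net effect is that the step your proposal needs most (the uniform clearing bound and conditional Borel--Cantelli) is absent, while the step you flag as the main obstacle is not part of this theorem at all.
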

This indicates a sharp phase transition in the behaviour of the orders at low, medium, and high prices. We will identify the threshold values $\kappa_b$ and $\kappa_a$ below.

The following alternative characterization of $\kappa_b$ and $\kappa_a$ will be useful. For a limit order book $\CL$, let $B_t(p)$ denote the number of bids at time $t$ at prices $\leq p$, and let $A_t(p)$ denote the number of asks at time $t$ at prices $\geq p$. Note that asks are counted from the right. Clearly, we have $B_t(\infty) = B_t(\beta_t)$ and similarly $A_t(-\infty) = A_t(\alpha_t)$.
\begin{cor}\label{coroll: liminf}
Suppose the arrival process is Poisson of rate 1 in time, and the arrival price distributions are continuous. The values of $\kappa_b$ and $\kappa_a$ may be found as
\[
F_b(\kappa_b) = \liminf_{T \to \infty} \frac1T B_T(\infty)
\]
and
\[
1-F_a(\kappa_a) = \liminf_{T \to \infty} \frac1T A_T(-\infty).
\]
\end{cor}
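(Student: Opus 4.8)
The plan is to sandwich the $\liminf$ between $F_b(\kappa_b-\epsilon)$ and $F_b(\kappa_b+\epsilon)$ for every $\epsilon>0$, using the two halves of Theorem~\ref{thm:kappa} for the lower and upper bounds respectively, and then to let $\epsilon\to0$ and invoke continuity of $F_b$ to make the two bounds coincide. The only probabilistic input beyond Theorem~\ref{thm:kappa} is a law of large numbers: since bid orders arrive according to a rate-$1$ Poisson process in time with iid prices of law $F_b$, the bids arriving at prices $\le p$ form a thinned Poisson process of rate $F_b(p)$, so writing $N_T(p)$ for the number of such arrivals by time $T$ we have $\frac1T N_T(p)\to F_b(p)$ a.s. Since every bid in the book arrived earlier at its own price, $B_T(p)\le N_T(p)$ for all $T$ and $p$.

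For the lower bound I would invoke the first half of Theorem~\ref{thm:kappa}: there is an a.s.\ finite time $T_0$ after which $\beta_t\ge\kappa_b-\epsilon$, so any bid arriving at a price $q<\kappa_b-\epsilon$ after $T_0$ arrives strictly below the current highest bid, hence joins the book (case~(iii) of the bid dynamics) and can never subsequently become the highest bid, since that would force $\beta_t=q<\kappa_b-\epsilon$; thus it never leaves. Therefore $B_T(\infty)\ge B_T(\kappa_b-\epsilon)\ge N_T(\kappa_b-\epsilon)-N_{T_0}(\kappa_b-\epsilon)$, and dividing by $T$ (the subtracted term being a fixed finite constant) gives $\liminf_T\frac1T B_T(\infty)\ge F_b(\kappa_b-\epsilon)$.

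For the upper bound I would use the second half: $\beta_t<\kappa_b+\epsilon$ infinitely often, i.e.\ there are times $t_n\to\infty$ at which the book holds no bid above $\kappa_b+\epsilon$. At each such ``reset'' time every remaining bid has price $\le\kappa_b+\epsilon$, so $B_{t_n}(\infty)\le N_{t_n}(\kappa_b+\epsilon)$; dividing by $t_n$ and using that the law of large numbers continues to hold along this subsequence gives $\liminf_T\frac1T B_T(\infty)\le\liminf_n\frac1{t_n}B_{t_n}(\infty)\le F_b(\kappa_b+\epsilon)$. Combining the two bounds and letting $\epsilon\to0$, continuity of $F_b$ forces equality at $F_b(\kappa_b)$. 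The expression for $\kappa_a$ then follows by the mirror-image argument, now counting ask arrivals at prices $\ge p$ (a thinned process of rate $1-F_a(p)$) and using the symmetric conclusions of Theorem~\ref{thm:kappa}.

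I expect the point requiring most care to be the upper bound, and conceptually the appearance of a $\liminf$ rather than an ordinary limit: between reset times the book may temporarily carry many transient bids at prices in $(\kappa_b,\kappa_a)$ that are later matched away, so $\frac1T B_T(\infty)$ genuinely oscillates and its low value is attained only along the reset subsequence $t_n$. One must therefore check both that a subsequential $\liminf$ dominates the full $\liminf$ (immediate) and that $\frac1{t_n}N_{t_n}(\kappa_b+\epsilon)\to F_b(\kappa_b+\epsilon)$ along the random times $t_n$, so that the bound $F_b(\kappa_b+\epsilon)$ is genuinely achieved and not merely an upper envelope.
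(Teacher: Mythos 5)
Your proposal is correct and follows essentially the same route as the paper's proof: a sandwich of $\liminf_{T\to\infty}\frac1T B_T(\infty)$ between $F_b(\kappa_b-\epsilon)$ (via the law of large numbers and the fact that bids below $\kappa_b-\epsilon$ eventually never depart) and $F_b(\kappa_b+\epsilon)$ (via the random times $T_n$ at which $\beta_{T_n}<\kappa_b+\epsilon$), followed by $\epsilon\to0$ and continuity of $F_b$. The extra details you supply---that only the highest bid can be executed, and that the a.s.\ LLN automatically transfers to the random subsequence $t_n$---are correct fillings-in of steps the paper leaves implicit, not a different argument.
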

The proof of these results appears in Section~\ref{section:coupling}.

The surprising fact is that we can obtain numeric values of $\kappa_b$ and $\kappa_a$ in terms of the arrival distributions.
\begin{thm}\label{thm:main}
Suppose arriving orders are equally likely to be bids and asks, and the densities $f_b$ and $f_a$ are absolutely continuous with respect to each other. Suppose further that $\kappa_b$ and $\kappa_a$ are known to be finite; for example, this is the case if there exist $x<y$ with the property that $F_b(x)=1-F_a(y)$, $F_b(y)=1-F_a(x)$, and $F_b(y)>1-2F_b(x)(1-F_b(x))$. (See Lemma~\ref{lm:finite kappa})

Then the threshold values $\kappa_b$ and $\kappa_a$ are the unique pair of finite numbers satisfying $F_b(\kappa_b) = 1-F_a(\kappa_a)$, such that the solution $\varpi^b$ of the second-order ODE
\[
\left(-\frac{f_a(x)}{1-F_b(x)} \left(F_a(x) \varpi^b(x) \right)' \right)' = \varpi^b(x) f_b(x)
\]
with initial conditions
\[
\varpi^b(\kappa_b) = \frac{1}{F_a(\kappa_b)},~~ \frac{d}{dx}\varpi^b(x)\vert_{x=\kappa_b} = -\frac{f_a(\kappa_b)}{F_a(\kappa_b)^2}
\]
satisfies $\varpi^b(x) \to 0$ as $x \uparrow \kappa_a$.

If $f_b = f_a = \one_{[0,1]}$, then
\[
\varpi^b(x) = (1-\kappa) \left(\frac1x + \log\left(\frac{1-x}{x}\right) \right), ~~ x \in (\kappa,1-\kappa)
\]
where if $w$ is the unique solution to $w e^w = e^{-1}$, then $\kappa = \frac{w}{w+1} \approx 0.217$.
\end{thm}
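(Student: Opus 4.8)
The plan is to read $\varpi^b$ as the stationary density (up to normalization) of the highest bid $\beta_t$ on the active window $(\kappa_b,\kappa_a)$, and to obtain the ODE as its balance equation. First I would write down, via a level-crossing / rate-conservation argument for $\beta_t$, the stationary probability current. When $\beta_t=x$, upward moves occur when a bid lands in $(x,\alpha_t)$ (a source proportional to $f_b$), while downward moves occur when an ask lands below $x$, at rate proportional to $F_a(x)$, after which $\beta$ drops to the next bid. Collecting terms, I expect the current to take the form
\[
J(x) := -\frac{f_a(x)}{1-F_b(x)}\bigl(F_a(x)\varpi^b(x)\bigr)', \qquad J'(x)=\varpi^b(x)f_b(x),
\]
which is exactly the stated second-order ODE once $J$ is eliminated. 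The boundary data then read cleanly: substituting the prescribed $\varpi^b(\kappa_b)$ and $(\varpi^b)'(\kappa_b)$ gives $\bigl(F_a\varpi^b\bigr)'(\kappa_b)=0$, hence $J(\kappa_b)=0$, i.e. no flux escapes across the frozen left edge $\kappa_b$ (bids below $\kappa_b$ never leave), while $\varpi^b(\kappa_a)=0$ records that $\beta_t$ does not reach the ask region.

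Second, I would establish the algebraic link $F_b(\kappa_b)=1-F_a(\kappa_a)$ directly from Corollary~\ref{coroll: liminf}. Since an execution removes one bid and one ask simultaneously, the cumulative numbers of bid and ask departures agree exactly; since bids and asks arrive at equal rates, $\frac1T\bigl(B_T(\infty)-A_T(-\infty)\bigr)\to 0$, so the two liminfs coincide and $F_b(\kappa_b)=1-F_a(\kappa_a)$. This reduces the unknowns to a one-parameter family, parametrized by $\kappa_b$ with $\kappa_a=\kappa_a(\kappa_b)$ determined.

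Third --- and this is where the real work lies --- I must justify the closure that turns the dynamics of $\beta_t$ into a genuine second-order ODE. The difficulty is that a downward move sends $\beta_t$ to the second-highest bid, whose location depends on the entire bid configuration in $(\kappa_b,\kappa_a)$ rather than on $\beta_t$ alone, so the generator of $\beta_t$ is a priori nonlocal. I would resolve this by invoking the (conjectured) stationary regime to identify the conditional law of the runner-up bid given $\beta_t=x$, and check that this is precisely what produces the factor $f_a/(1-F_b)$ and collapses the integral operator to the differential form above; this self-consistency step is the main obstacle. Granting it, uniqueness follows from a shooting argument: solving the linear ODE from $\kappa_b$ with the fixed initial data and reading off the terminal value $\varpi^b(\kappa_a^-)$ as a function of $\kappa_b$, I would show by a Sturm-type comparison (using that the coefficients are monotone and that $\kappa_a$ moves monotonically with $\kappa_b$) that this terminal value is strictly monotone, so the requirement $\varpi^b(\kappa_a)=0$ selects a unique admissible pair; finiteness of that pair is guaranteed by Lemma~\ref{lm:finite kappa}.

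Finally, for the uniform case $f_b=f_a=\one_{[0,1]}$ the ODE collapses to $\bigl(-\frac{1}{1-x}(x\varpi^b)'\bigr)'=\varpi^b$, a linear equation I would integrate directly, confirming that $\varpi^b(x)=(1-\kappa)\bigl(\frac1x+\log\frac{1-x}{x}\bigr)$ is the solution meeting the initial data at $\kappa_b=\kappa$. Symmetry under $x\mapsto 1-x$ forces $\kappa_a=1-\kappa$, so imposing $\varpi^b(1-\kappa)=0$ reduces everything to the transcendental equation $(1-\kappa)\log\frac{1-\kappa}{\kappa}=1$. Writing $\kappa=\frac{w}{w+1}$ turns this into $-\log w=w+1$, i.e. $we^{w}=e^{-1}$, which identifies $w$ as the stated root and gives $\kappa\approx0.217$.
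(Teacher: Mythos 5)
Your step three is a genuine gap, and it is precisely the step the paper's architecture is designed to avoid. You cast the problem as a level-crossing/generator equation for the single process $\beta_t$, which forces you to know the conditional law of the second-highest bid given $\beta_t=x$; you propose to obtain it from ``the (conjectured) stationary regime.'' This fails twice over. First, stationarity is exactly Conjecture~\ref{conj:recurrent}, which the paper explicitly cannot prove; Theorem~\ref{thm:main} is unconditional, and the paper replaces stationary densities by ergodic averages along the special subsequence $T_n$ of Lemma~\ref{lm: Tn}, computed in binned (countable-state) models via Lemma~\ref{lm: pi eqns}, and then passed to the continuum through the many-bin limit (Lemma~\ref{lm: strict and ordinary}) and a compactness argument for the discrete Radon--Nikodym derivatives $\pi^b(k)/b(k)$. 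Your plan has no substitute for any of these steps, and even if completed would prove only a statement conditional on the conjecture. Second, and worse, even granting stationarity, the runner-up law is exactly the ``shape of the book'' information the paper says is unavailable, and nothing in your sketch computes it. The paper's equations~\eqref{lln} are not level-crossing identities for $\beta_t$: they are conservation-of-orders identities per bin (arrivals minus departures equals unfulfilled orders), in which bid departures from bin $k$ require only the event that $\beta_t$ lies in bin $k$ together with an ask arrival at or below it --- where $\beta_t$ jumps \emph{afterwards} is irrelevant. This closes the system in the coupled pair $(\pi^b,\pi^a)$ with no nonlocal term at all, and the coefficient you attribute to a runner-up law in fact arises from eliminating $\varpi^a$ via the ask-side balance $\varpi^a(x)\,(1-F_b(x)) = \int_{\kappa_b}^x \varpi^b(y) f_b(y)\,dy$. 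The obstacle you flag as ``the main obstacle'' simply never appears on the paper's route; on yours it remains unresolved.

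On the secondary points: your derivation of $F_b(\kappa_b)=1-F_a(\kappa_a)$ (paired departures plus the symmetric-random-walk bound on the arrival difference) matches the paper's reasoning inside Lemma~\ref{lm: Tn}; your shooting/monotonicity sketch for uniqueness is about as detailed as the paper's own one-line remark; and your uniform-case algebra is correct --- $\varpi^b(1-\kappa)=0$ does reduce to $(1-\kappa)\log\frac{1-\kappa}{\kappa}=1$, which under $\kappa=\frac{w}{w+1}$ becomes $we^w=e^{-1}$. One caution, however, which shows the ``integrate directly, confirming'' step was not actually performed: direct substitution shows the displayed closed form satisfies $\left(-(1-x)\left(x\varpi^b(x)\right)'\right)' = \varpi^b(x)$ and \emph{not} $\left(-\frac{1}{1-x}\left(x\varpi^b(x)\right)'\right)' = \varpi^b(x)$. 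Indeed, the paper's own derivation --- differentiate $\left(F_a(x)\varpi^b(x)\right)' = -\varpi^a(x)f_a(x)$ and substitute the ask-side equation --- yields $\left(-\frac{1-F_b(x)}{f_a(x)}\left(F_a(x)\varpi^b(x)\right)'\right)' = \varpi^b(x)f_b(x)$; the factor $\frac{f_a}{1-F_b}$ in the theorem statement, which you copied into your ``current'' $J(x)$, is inverted. Carrying out the verification you claim would have caught this.
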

The proof of this result is in Section~\ref{section:proof of main}.

We conjecture that $\varpi^b(x) f_b(x)$ is the steady-state density of the distribution of $\beta_t$ (with respect to the Lebesgue measure). Unfortunately, we have been unable to show the positive recurrence that would imply the existence of a steady-state density for $\beta_t$, so instead in Lemma~\ref{lm: pi eqns} and proof of Theorem~\ref{thm:main} we derive that this quantity is the ergodic limit of the empirical distribution of $\beta_t$ along a certain sequence of times. We conjecture that the true result is as follows.

\begin{conjecture}\label{conj:recurrent}
Let the arrival process be as in Theorem~\ref{thm:main}.
\begin{enumerate}
\item Consider a binned limit order book with infinitely many bids in the bin containing $\kappa_b$, and infinitely many asks in the bin containing $\kappa_a$. (Its state is described by the number of and type of orders in the bins between these two.) This limit order book is recurrent.

\item Let $\epsilon > 0$ be fixed. Consider a limit order book $\CL$ whose initial state has infinitely many bids at $\kappa_b+\epsilon$ and infinitely many asks at $\kappa_a-\epsilon$. (If $\kappa_b$ and $\kappa_a$ are infinite, put the bids and asks at $F_b^{-1}(\epsilon)$ and $F_a^{-1}(1-\epsilon)$.) This limit order book is positive Harris recurrent.
\end{enumerate}
\end{conjecture}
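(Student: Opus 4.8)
The plan is to prove both statements by a Foster–Lyapunov drift argument, using the infinitely-many orders at the two walls $\kappa_b+\epsilon$ and $\kappa_a-\epsilon$ to confine the dynamics. A wall of infinitely many bids at $\kappa_b+\epsilon$ forces every arriving ask below that price to depart immediately without changing $\beta_t$, and symmetrically for the ask wall, so the only part of the state that genuinely evolves is the finite (but unboundedly large) configuration of orders strictly inside $(\kappa_b+\epsilon,\kappa_a-\epsilon)$. For part (1) this active configuration is a continuous-time Markov jump process on a \emph{countable} state space (the number and type of orders per bin), and I would apply Foster's criterion; for part (2) it is a jump process on finite point configurations, and I would apply the generator-drift form of the Meyn–Tweedie criterion, verifying $\psi$-irreducibility and petiteness separately.

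The heart of the argument is the choice of Lyapunov function. The first thing I would try is the raw count of active orders $N=b+a$, with $b,a$ the numbers of active bids and asks. Classifying each arrival by whether it joins the book or crosses and matches the best order, the generator drift of $N$, when both sides are nonempty, works out in the symmetric uniform case $f_b=f_a=\one_{[0,1]}$ of Theorem~\ref{thm:main} (where $\kappa_b=\kappa$, $\kappa_a=1-\kappa$) to
\[
\mathcal{A}N \;=\; (\alpha_t-\beta_t) - (\kappa+\epsilon),
\]
so the count decreases on average exactly when the bid–ask spread is below the distance to criticality. This matches the intuition that full books with a small spread clear, but it also exposes the \emph{main obstacle}: when orders pile up near the two walls with a large spread, $\alpha_t-\beta_t$ can exceed $\kappa+\epsilon$ (for small $\epsilon$ the piled spread is about $1-2\kappa>\kappa$), so the raw count has \emph{positive} drift even though Theorem~\ref{thm:kappa} guarantees the region clears infinitely often. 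The resolution is that arrivals landing in the gap $(\beta_t,\alpha_t)$ raise $\beta_t$ or lower $\alpha_t$ and hence contract the spread, so the process makes progress toward the clearing regime even while $N$ grows; a correct Lyapunov function must credit this progress, and the same defect afflicts one-sided configurations.

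To capture it I would replace $N$ by a price-weighted count $V=\sum_{\text{bids }p} g(p)+\sum_{\text{asks }p} h(p)$, with $g$ increasing and $h$ decreasing, so that adding a high bid (which contracts the spread) is penalized less than it is rewarded by the subsequent ease of clearing. The natural candidates for these weights come from the stationary profile of Theorem~\ref{thm:main}: I expect $g$ and $h$ to be expressible through $\varpi^b$ (equivalently through the solution of the ODE there), giving $\mathcal{A}V\le-\eta<0$ outside a compact set of configurations. If a single such $V$ proves elusive, the fallback is a two-stage argument: a first drift function showing the spread contracts below $\kappa+\epsilon$ in bounded expected time from any state, followed by the count argument above, whose drift is strictly negative once the spread is small; these can then be patched into one Lyapunov function in the usual way.

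Finally, to upgrade recurrence to \emph{positive Harris} recurrence in part (2) I would verify the two side conditions the drift criterion requires. Irreducibility follows from the mutual absolute continuity of $f_b$ and $f_a$: from any configuration there is, in bounded time, positive probability of a run of asks clearing all active bids and then a run of bids clearing all active asks, reaching the empty active region; this makes the empty state reachable from everywhere and serves as a natural regeneration set. Petiteness of compact sets follows likewise, since the arrival densities yield a genuine density for the post-jump positions of $\beta_t$ and $\alpha_t$ and hence the needed minorization. Combining irreducibility, petiteness, and the negative drift of $V$ then gives positive Harris recurrence by the Meyn–Tweedie theorem, with part (1) recovered as the countable-state special case under Foster's criterion. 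The delicate point throughout is the explicit construction and drift verification of $V$: the failure of the naive count shows that the price structure, not merely the order count, governs stability, which is precisely why this is stated as a conjecture.
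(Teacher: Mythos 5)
You are attempting to prove a statement that the paper itself does not prove: this is stated as Conjecture~\ref{conj:recurrent}, and the paper offers only partial substitutes in Section~\ref{section:recurrence} (recurrence with walls at $x,y$ satisfying explicit conditions such as $F_b(y)<F_b(x)+F_a(x)$, via stochastic domination of each bin's occupancy by a geometric random variable, and positive recurrence of one specific $5$-bin uniform book via the piecewise-linear Lyapunov function $\CL(x)=\min_F\langle x,v_F\rangle$), plus numerical evidence. Measured against that, your proposal is a program rather than a proof, and its central object is missing: the weighted function $V=\sum g(p)+\sum h(p)$ is never constructed and its drift never verified, as you concede. Your computation for the raw count is correct up to rate normalization (with both sides active and each side arriving at rate $1$, one gets drift $2\bigl[(\alpha_t-\beta_t)-(\kappa+\epsilon)\bigr]$), and it correctly locates the obstruction; but the fallback ``two-stage'' argument does not close the gap. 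Negative drift of $N$ holds only while the spread is below $\kappa+\epsilon$, and that region is \emph{not} invariant: executing the top of a pile can instantly widen the spread again, so ``patching in the usual way'' presupposes exactly the unproven quantitative balance between spread contraction and pile growth --- which is the conjecture itself. The Harris part has a similar circularity: reaching the empty active region with probability bounded below is plausible from a \emph{compact} set of configurations, but to reduce an arbitrary unbounded state to a compact set you already need the drift function $V$ you have not built. A smaller but real error: a wall of bids at $\kappa_b+\epsilon$ does not make asks arriving below that price depart ``without changing $\beta_t$''; by the model's rules such an ask executes the \emph{current best bid} $\beta_{t-}$, which removes an interior active bid whenever one exists (your drift formula is nonetheless consistent with the correct dynamics, but the justification as written is wrong).

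The approach is also structurally mismatched to part (1) of the conjecture. There the walls sit in the bins containing $\kappa_b$ and $\kappa_a$ themselves --- the critical case --- and the claim is mere recurrence, not positive recurrence. A strict Foster drift $\mathcal{A}V\le-\eta<0$ outside a compact set would establish positive recurrence, so it cannot be available at criticality; indeed the occupation identities of Lemma~\ref{lm: pi eqns}, with equality in \eqref{sum < 1}, indicate that the effective drift vanishes there, so one would need Lamperti-type criteria (e.g.\ $\mathcal{A}V\le 0$ with controlled second moments, logarithmic $V$) rather than Foster's criterion, and your plan never distinguishes the two regimes. Finally, the paper's one successful Lyapunov construction is instructive against your ansatz: its direction vectors $v_F$ depend on the sign pattern of the configuration (the relative position of best bid and best ask), not only on per-order prices, which suggests that fixed price weights $g,h$ --- even ones extracted from $\varpi^b$ --- are unlikely to have uniformly negative drift, for the same reason the raw count fails. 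In short: the heuristics are sensible and the count-drift identity is a genuinely useful observation, but the argument has an unfilled hole precisely where the difficulty lies, and part (1) would require a different technique altogether.
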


Our analysis of $\varpi^i$, $i=a,b$ in Lemma~\ref{lm: pi eqns} and the proof of Theorem~\ref{thm:main} will show the following corollary.
\begin{cor}
Suppose Conjecture~\ref{conj:recurrent} holds. Let $\varpi^b_\epsilon$ and $\varpi^a_\epsilon$ be the distribution of the rightmost bid and the leftmost ask in the limit order books with infinitely many bids at $\kappa_b+\epsilon$ and asks at $\kappa_a - \epsilon$. As $\epsilon \to 0$, we have $\varpi^b_\epsilon(x) \to \varpi^b(x) f_b(x)$ and $\varpi^a(x) \to \varpi^a(x) f_a(x)$ uniformly.
\end{cor}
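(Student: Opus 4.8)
The plan is to combine the positive Harris recurrence granted by Conjecture~\ref{conj:recurrent}(2) with the boundary-value problem for the stationary density derived in Lemma~\ref{lm: pi eqns} and the proof of Theorem~\ref{thm:main}, and then to read off the convergence as $\epsilon \downarrow 0$ from continuous dependence of the ODE on its data. First I would record that, under the conjecture, the $\epsilon$-perturbed book (infinitely many bids at $\kappa_b+\epsilon$, asks at $\kappa_a-\epsilon$) has a unique stationary law, so the rightmost bid $\beta_t$ has a well-defined stationary density $\varpi^b_\epsilon$ on $[\kappa_b+\epsilon,\kappa_a-\epsilon)$ and, by the ergodic theorem, the empirical distribution of $\beta_t$ converges to it. This puts $\varpi^b_\epsilon$ in exactly the framework in which Lemma~\ref{lm: pi eqns} derives the balance equations, so that the ratio $\tilde\varpi^b_\epsilon := \varpi^b_\epsilon/f_b$ solves the \emph{same} second-order ODE as $\varpi^b$, only on the shrunken interval $(\kappa_b+\epsilon,\kappa_a-\epsilon)$.

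Next I would identify the $\epsilon$-dependent side conditions. Repeating the derivation of the initial conditions in Theorem~\ref{thm:main}, but with the bid floor at $\kappa_b+\epsilon$, gives
\[
\tilde\varpi^b_\epsilon(\kappa_b+\epsilon) = \frac{1}{F_a(\kappa_b+\epsilon)}, \qquad \frac{d}{dx}\tilde\varpi^b_\epsilon(x)\big\vert_{x=\kappa_b+\epsilon} = -\frac{f_a(\kappa_b+\epsilon)}{F_a(\kappa_b+\epsilon)^2},
\]
together with the pinning/decay condition imposed at the ask floor $\kappa_a-\epsilon$. Because $f_a$ and $F_a$ are continuous, these data are continuous in $\epsilon$ and converge, as $\epsilon\downarrow 0$, to the initial conditions and the decay condition that define $\varpi^b$. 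The convergence itself I would then obtain from continuous dependence of solutions of a linear second-order ODE on the starting point and the Cauchy data: fixing a compact $K\subset(\kappa_b,\kappa_a)$, for all small $\epsilon$ we have $K\subset(\kappa_b+\epsilon,\kappa_a-\epsilon)$, the coefficients $f_a/(1-F_b)$, $F_a$ and $f_b$ are continuous and bounded on $K$, and the two solutions start from data tending to a common limit, so a Gronwall estimate yields $\tilde\varpi^b_\epsilon\to\varpi^b$ uniformly on $K$. Multiplying by the continuous factor $f_b$, and checking that the normalizing masses agree in the limit (the integral of $\varpi^b f_b$ is shown to be $1$ in the proof of Theorem~\ref{thm:main}, while $\int\varpi^b_\epsilon=1$ by construction), gives $\varpi^b_\epsilon\to\varpi^b f_b$; the ask statement follows by the bid--ask symmetry of the dynamics.

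The main obstacle is the behaviour at the two moving endpoints, where the estimate must be sharpened to reach a genuinely \emph{uniform} statement. At the right endpoint the coefficient $f_a/(1-F_b)$ together with the enforced decay make the problem effectively singular as $x\uparrow\kappa_a$; since the selecting condition there is a one-sided decay rather than two-sided Cauchy data, continuous dependence on the endpoint is not automatic, and I would instead use a comparison/monotonicity argument for the linear flow (or an a priori bound on $\varpi^b_\epsilon$ near $\kappa_a-\epsilon$) to pass from locally-uniform to uniform control. At the left endpoint the limiting density does not vanish, since $\varpi^b(x)f_b(x)\to f_b(\kappa_b)/F_a(\kappa_b)$ as $x\downarrow\kappa_b$, whereas $\varpi^b_\epsilon$ vanishes on $(\kappa_b,\kappa_b+\epsilon)$; this width-$\epsilon$ mismatch means the uniform convergence is to be read on compact subintervals of $(\kappa_b,\kappa_a)$ (equivalently on the common support $[\kappa_b+\epsilon,\kappa_a-\epsilon]$), which is exactly what the Gronwall argument delivers, with tightness controlling the small residual mass near the ends.
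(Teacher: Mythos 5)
Your high-level architecture --- Harris recurrence gives a stationary law for the $\epsilon$-book, the stationary law satisfies the balance equations of Lemma~\ref{lm: pi eqns}, these yield the same second-order ODE as in Theorem~\ref{thm:main} with $\epsilon$-shifted data, and continuity in $\epsilon$ concludes --- is exactly the route the paper intends (the paper offers no separate proof, asserting the corollary follows from Lemma~\ref{lm: pi eqns} and the proof of Theorem~\ref{thm:main}), and your reading of the uniform convergence as holding on the common support is the right one. But there is a genuine gap at the boundaries, and it is not the one you flagged. For fixed $\epsilon>0$ the stationary law of the rightmost bid is \emph{not} a density on $(\kappa_b+\epsilon,\kappa_a-\epsilon)$: positive recurrence forces an atom of mass $m^b_\epsilon>0$ at the stack $\kappa_b+\epsilon$ (the interior holds no bids during a positive fraction of time), and symmetrically the leftmost ask has an atom $m^a_\epsilon>0$ at $\kappa_a-\epsilon$. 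Rerunning the balance equations with the atoms included gives, writing $\tilde\varpi^b_\epsilon=\varpi^b_\epsilon/f_b$ and $\tilde\varpi^a_\epsilon=\varpi^a_\epsilon/f_a$ for the density parts, $\tilde\varpi^b_\epsilon(x)F_a(x)=m^a_\epsilon+\int_x^{\kappa_a-\epsilon}\tilde\varpi^a_\epsilon f_a$ and $\tilde\varpi^a_\epsilon(x)\left(1-F_b(x)\right)=m^b_\epsilon+\int_{\kappa_b+\epsilon}^x\tilde\varpi^b_\epsilon f_b$. Your value condition $\tilde\varpi^b_\epsilon(\kappa_b+\epsilon)=1/F_a(\kappa_b+\epsilon)$ survives, but the slope condition does not: differentiating yields $\frac{d}{dx}\tilde\varpi^b_\epsilon(x)\vert_{x=\kappa_b+\epsilon}=-\frac{f_a(\kappa_b+\epsilon)}{F_a(\kappa_b+\epsilon)^2}-\frac{m^b_\epsilon\, f_a(\kappa_b+\epsilon)}{F_a(\kappa_b+\epsilon)\left(1-F_b(\kappa_b+\epsilon)\right)}$, since $\tilde\varpi^a_\epsilon(x)\to m^b_\epsilon/(1-F_b(\kappa_b+\epsilon))>0$ as $x\downarrow\kappa_b+\epsilon$. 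Likewise there is no ``pinning/decay condition at the ask floor'' for fixed $\epsilon$: instead $\tilde\varpi^b_\epsilon(x)\to m^a_\epsilon/F_a(\kappa_a-\epsilon)>0$ as $x\uparrow\kappa_a-\epsilon$. The vanishing at the right endpoint is precisely the feature that emerges only in the limit $\epsilon\to0$ (and is what selects $\kappa_b$ in Theorem~\ref{thm:main}); for fixed $\epsilon$ the problem is closed by the normalizations and the two atom identities, so your Gronwall/continuous-dependence step cannot be run with the Cauchy data you wrote down: those data contain the unknown quantities $m^b_\epsilon$, $m^a_\epsilon$.

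The missing step is therefore to prove $m^b_\epsilon, m^a_\epsilon\to0$ as $\epsilon\to0$; granted that, your argument closes (the corrected data converge to the data of Theorem~\ref{thm:main}, any subsequential limit of $(\tilde\varpi^b_\epsilon,\tilde\varpi^a_\epsilon)$ solves the zero-atom system, uniqueness from Theorem~\ref{thm:main} identifies the limit, and continuous dependence on compacts plus the boundary identities upgrades this to uniform convergence on the common support). Note that soft estimates do not give the vanishing: the identities above only yield $m^b_\epsilon\left(F_a(\kappa_a-\epsilon)-F_a(\kappa_b+\epsilon)\right)\le 1$, which leaves $m^b_\epsilon$ bounded but possibly bounded away from $0$. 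In the paper's finite-bin analysis the analogous boundary masses are killed by the bin-shrinking bounds \eqref{1st ineq}--\eqref{2nd ineq} and \eqref{eqn:right limit} (e.g.\ $\pi^b(k_b)\le b(k_b)\to0$), but for fixed $\epsilon$ there is no bin to shrink, so a separate probabilistic input is needed --- for instance a comparison of the stack book with the original book via the couplings of Section~\ref{section:coupling} and the empirical limits of Lemma~\ref{lm: Tn} (heuristically $m^b_\epsilon\approx\int_{\kappa_b}^{\kappa_b+\epsilon}\varpi^b f_b=O(\epsilon)$); be aware that Corollary~\ref{coroll:monotonicity} does not apply directly, because the $\epsilon$-book adds bids \emph{and} asks, so the two stacks push $\beta_t$ in opposite directions. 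A smaller slip: Lemma~\ref{lm: pi eqns} is stated for binned books and for limits along the special times $T_n$ of Lemma~\ref{lm: Tn}; under the conjecture you should rederive the balance equations directly from stationarity (rate conservation), which is exactly where the atom terms above enter.
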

Evidence (theoretical and numerical) supporting the conjecture is presented in Section~\ref{section:recurrence}.

\section{Coupling and monotonicity}\label{section:coupling}
We now present some coupling arguments, which show monotonicity properties of our system. Our results will compare behaviors of limit order books $\CL$ and $\tilde \CL$ with the same underlying arrival process; we will consider the effect of changing the initial state and the effect of changing the bid-ask matching rule by changing the binning. We will refer to the state of the limit order books at time $t$ as $L_t$ and $\tilde L_t$ respectively.

\subsection{Initial state}

\begin{lem}
Let $\CL$ and $\tilde \CL$ be two limit order books with the same arrival process and order matching rule (i.e., both ordinary, or both binned with the same bins, or both strict binned with the same bins). Suppose the initial state $\tilde L_0$ differs from $L_0$ by the addition of a single bid. Then at all times $t$, the state $\tilde L_t$ differs from $L_t$ either by the addition of a single bid, or by the removal of a single ask. Similarly, if $\tilde L_0$ differs from $L_0$ by the addition of a single ask, then $\tilde L_t$ differs from $L_t$ either by the addition of a single ask or removal of a single bid.
\end{lem}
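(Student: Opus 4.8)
The plan is to induct on the successive arrival events. Between arrivals both books are constant, so by right-continuity and the fact that $\CL$ and $\tilde\CL$ share the same arrival process it suffices to show that a suitable invariant relating $L_t$ and $\tilde L_t$ is preserved across a single arrival. The invariant I would carry is precisely the dichotomy in the statement: the symmetric difference $L_t \Delta \tilde L_t$ consists of a single order which is either a bid lying in $\tilde L_t$ but not $L_t$ (the \emph{extra-bid} configuration) or an ask lying in $L_t$ but not $\tilde L_t$ (the \emph{missing-ask} configuration), together with the requirement that both books remain valid, all bids below all asks. The whole content of the lemma is that this two-element family of configurations is closed under one arrival.

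The case analysis is organized by which configuration currently holds, whether the arrival is a bid or an ask, and where the arriving price sits relative to the thresholds $\beta_{t-}, \tilde\beta_{t-}, \alpha_{t-}, \tilde\alpha_{t-}$. The simplification I would exploit is that, since the two books differ by a single order, their threshold pairs nearly coincide: in the extra-bid configuration the asks are identical, so $\alpha_{t-} = \tilde\alpha_{t-}$ and $\tilde\beta_{t-} = \max(\beta_{t-}, q^*) \ge \beta_{t-}$, where $q^*$ is the extra bid. If $q^*$ lies strictly below the common top bid then $\tilde\beta_{t-} = \beta_{t-}$, both books have identical thresholds, they process the arrival identically, and the extra bid is retained. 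The only arrivals treated differently by the two books are therefore those whose price falls strictly between the two distinct thresholds, i.e.\ in the extra-bid configuration with $q^* = \tilde\beta_{t-} > \beta_{t-}$, an order landing in $(\beta_{t-}, q^*)$.

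The decisive case is an ask arriving at price $p \in (\beta_{t-}, q^*)$ in the extra-bid configuration: validity gives $p < q^* < \alpha_{t-}$, so in $L$ the price $p$ exceeds $\beta_{t-}$ and is below $\alpha_{t-}$ and is stored as a new ask, whereas in $\tilde L$ it lies below $\tilde\beta_{t-} = q^*$ and executes the top bid $q^*$. Writing out both updates, $\tilde L$ then coincides with the old $L_{t-}$ while $L$ has gained the ask $p$, so the difference is a single ask present only in $L$: we have flipped into the missing-ask configuration. An ask arriving below $\beta_{t-}$ instead converts the extra bid $q^*$ into the extra bid $\beta_{t-}$, and every remaining placement of a bid or an ask either leaves the two books with identical thresholds or preserves the extra bid. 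In each case one verifies that the new difference is again a single extra bid or single missing ask---never an extra ask, a missing bid, or a two-order discrepancy---and that validity is maintained. This closure check is the crux of the argument and the step where the bookkeeping must be carried out carefully.

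Finally, the missing-ask configuration is the mirror image of the extra-bid configuration under the bid--ask reflection $p \mapsto -p$ composed with interchanging the roles of $\CL$ and $\tilde\CL$, a symmetry under which the dynamics are invariant and which swaps ``extra bid'' with ``missing ask''; its sub-cases therefore need not be redone from scratch, and the second assertion of the lemma (an initial extra ask) follows by the same reflection. The binned matching rules require no new ideas: one replaces each strict price comparison in the case analysis by the corresponding same-bin-or-crossing comparison, using that $\CL$ and $\tilde\CL$ share the same bins. I expect the main obstacle to be purely organizational---enumerating the sub-cases without error and confirming closure in each---rather than any single hard estimate.
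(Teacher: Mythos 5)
Your proposal is correct and matches the paper's own argument: both proceed by induction on arrival events, maintaining the extra-bid/missing-ask dichotomy, with the same two divergence cases (an ask landing between the two top bids flips the configuration to missing-ask, while an ask below both converts the identity of the extra bid) and the same bid--ask symmetry for the mirror statement. Your version simply spells out the threshold bookkeeping that the paper's terser proof leaves implicit.
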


\begin{proof}
We prove the statement for the case of an extra bid, the case of an extra ask being entirely similar. The proof proceeds by induction on the number of arriving orders.

Clearly the statement is true before any orders arrive. Moreover, until the extra bid is removed in $\tilde \CL$, the order arrivals and departures in $\CL$ and $\tilde \CL$ coincide. Consider the time when the extra bid is removed in $\tilde \CL$; this corresponds to the arrival of an ask at some price $p$. Now, if in $\CL$ this ask also immediately departs (with some other bid at price $q$), then the state of $\tilde \CL$ differs from the state of $\CL$ by the addition of a bid (at price $q$). If, however, in $\CL$ the ask does not immediately depart, then the state of $\tilde \CL$ differs from the state of $\CL$ by the removal of this ask.
\end{proof}

We obtain some easy, but useful corollaries.
\begin{cor}\label{coroll:bounded perturbations}
Consider a limit order book $\CL$, and construct $\tilde \CL$ by, at some finite number of points in time, adding or removing a finite number of orders from $\CL$, for a total of at most $M$. Then at all times, the states of $\CL$ and $\tilde \CL$ differ by at most $M$ orders.
\end{cor}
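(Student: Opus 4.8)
The plan is to reduce the statement to a repeated application of the preceding lemma by a telescoping argument. The natural quantity to track is the \emph{number of orders by which two books differ}, namely $d(\CL,\CL') = \abs{\mathcal{B}\,\Delta\,\mathcal{B}'} + \abs{\mathcal{A}\,\Delta\,\mathcal{A}'}$, where $\mathcal{B},\mathcal{A}$ and $\mathcal{B}',\mathcal{A}'$ are the sets of bid and ask prices in the two books. Since the cardinality of a symmetric difference is a metric on sets, and a sum of metrics is a metric, $d$ obeys the triangle inequality; this is precisely what lets individual single-order discrepancies compound additively rather than uncontrollably. (Even if a book carries infinitely many orders, all the symmetric differences that arise will be finite, bounded by $M$ by construction, so $d$ is well defined throughout.)

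First I would decompose the perturbation into single-order operations. By hypothesis $\tilde\CL$ is obtained from $\CL$ by adding or removing orders at finitely many instants, at most $M$ orders in total; list these individual additions and removals as $P_1,\dots,P_k$ with $k\le M$, ordered by the time at which they occur (breaking ties within a single instant arbitrarily). I then build a chain of intermediate books $\CL=\CL^{(0)},\CL^{(1)},\dots,\CL^{(k)}=\tilde\CL$, all driven by the common arrival process, where $\CL^{(j)}$ carries exactly the operations $P_1,\dots,P_j$. Consecutive books $\CL^{(j-1)}$ and $\CL^{(j)}$ are then identical up to the time $t_j$ at which $P_j$ is applied, and at $t_j$ they come to differ by precisely one order.

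Next I would apply the preceding lemma to each consecutive pair. That lemma is phrased for a single-bid (or single-ask) discrepancy present at time $0$, but its inductive proof only inspects order arrivals occurring \emph{after} the discrepancy appears and never uses that this happens at time $0$; hence it applies verbatim to $\CL^{(j-1)}$ and $\CL^{(j)}$ with $t_j$ playing the role of the initial time. If $P_j$ adds a bid the lemma applies directly; if $P_j$ removes a bid (or an ask), I would swap the roles of the two books, since then $\CL^{(j-1)}$ differs from $\CL^{(j)}$ by the \emph{addition} of that order, and $d$ is symmetric. In every case the lemma yields $d\bigl(\CL^{(j-1)}_t,\CL^{(j)}_t\bigr)\le 1$ for all $t$, whence by the triangle inequality $d(L_t,\tilde L_t)\le\sum_{j=1}^{k} d\bigl(\CL^{(j-1)}_t,\CL^{(j)}_t\bigr)\le k\le M$ at every time $t$, which is the claim. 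I do not expect a serious obstacle: essentially all the content is carried by the preceding lemma, and the only points requiring care are the observation that that lemma remains valid when the one-order difference is introduced at an arbitrary time rather than at time $0$, and the reduction of removals to additions by symmetry. The step of compounding the $k$ single-order bounds into the bound $M$ is then just the triangle inequality for $d$.
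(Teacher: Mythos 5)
Your proof is correct and is essentially the argument the paper intends: the corollary is stated there without a written proof as an ``easy'' consequence of the preceding lemma, and the implicit justification is exactly your telescoping chain of single-order perturbations, the observation that the lemma applies verbatim when the one-order discrepancy is introduced at an arbitrary time $t_j$ (with removals handled by swapping the roles of the two books), and the triangle inequality for the symmetric-difference count. Nothing further is needed.
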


\begin{cor}\label{coroll:monotonicity}
Consider a limit order book $\CL$, and construct $\tilde \CL$ by, at some points in time, adding some number of bids (but leaving the asks unchanged). Then at all times $\tilde \CL$ contains all of the bids in $\CL$ (and possibly some more), and a subset of the asks. Similarly, if we add some number of asks, but leave the bids unchanged, then $\tilde L$ will contain all of the asks in $\CL$, and a subset of the bids.
\end{cor}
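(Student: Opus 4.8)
The plan is to reduce the statement to repeated application of the single-bid coupling lemma proved just above. I treat the bid case; the ask case will follow by the symmetric argument. Introduce the partial order on limit order books driven by a common arrival process in which $\CL' \preceq \CL''$ means that at every time $t$ the bid set of $\CL''$ contains that of $\CL'$ and the ask set of $\CL''$ is contained in that of $\CL'$. The conclusion to be proved is exactly $\CL \preceq \tilde\CL$, and since superset/subset relations compose, $\preceq$ is transitive; hence it suffices to realize the passage from $\CL$ to $\tilde\CL$ as a finite chain of single-bid additions and to check the relation across each link.

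Concretely, suppose the construction of $\tilde\CL$ adds bids at times $s_1 \le s_2 \le \cdots \le s_k$, where several bids added at the same instant are staggered and inserted one at a time (this changes neither the final state nor the validity of the intermediate books). I would set $\CL^{(0)} = \CL$ and let $\CL^{(j)}$ be the book carrying the first $j$ of these additions, so that $\CL^{(k)} = \tilde\CL$. For each $j$, the books $\CL^{(j-1)}$ and $\CL^{(j)}$ are driven by the same arrivals and agree up to time $s_j$, at which point $\CL^{(j)}$ receives one extra bid. Viewing the common state at $s_j$ as an initial condition and the post-$s_j$ arrivals as the driving process, the single-bid lemma applies: for all $t \ge s_j$ the state of $\CL^{(j)}$ differs from that of $\CL^{(j-1)}$ either by one additional bid or by one missing ask, and for $t < s_j$ the two coincide. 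In either case $\CL^{(j-1)} \preceq \CL^{(j)}$.

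Chaining these relations through the transitivity of $\preceq$ yields $\CL = \CL^{(0)} \preceq \CL^{(1)} \preceq \cdots \preceq \CL^{(k)} = \tilde\CL$, which is precisely the asserted monotonicity: $\tilde\CL$ carries all of the bids in $\CL$ and a subset of its asks. The ask version is obtained by interchanging the roles of bids and asks throughout.

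The one point requiring care—and the only real obstacle—is that the single-bid lemma is stated for a perturbation at time $0$, whereas here the $j$-th perturbation occurs at time $s_j$. What is needed is the observation that the lemma's inductive proof depends only on the state at the moment of perturbation and on the subsequent arrival stream, not on the value of the clock; restarting time at $s_j$ and feeding in the state of $\CL^{(j-1)}$ at $s_j$ as the initial condition therefore legitimizes the application. A secondary bookkeeping issue is the treatment of simultaneous additions, handled by the staggering remark above; once that is granted, the argument is a pure composition of order relations and involves no new estimates.
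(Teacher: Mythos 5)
Your proof is correct and matches the paper's intended argument: the paper presents this as an immediate corollary of the single-bid lemma, obtained precisely by iterating that lemma one added order at a time and noting that both possible outcomes (one extra bid or one missing ask) preserve the bid-superset/ask-subset relation, which is transitive. Your explicit handling of the time-shifted application and of simultaneous additions fills in bookkeeping the paper leaves implicit, but introduces no new ideas beyond what the paper relies on.
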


We are now in a position to prove Theorem~\ref{thm:kappa}.
\begin{proof}[Proof of Theorem~\ref{thm:kappa}]
Our goal is to show that the event
\[
\CE(x) = \{\beta_t < x \text{ infinitely often}\}
\]
occurs with probability 0 or 1 for any $x$. Note that for $x < y$ we have $\CE(x) \subset \CE(y)$; we will take $\kappa_b = \inf\{x: \BP(\CE(x)) = 1\}$.

For $M \geq 0$, let
\[
\CE^M(x) = \{\beta_t < x \text{ infinitely often in $\tilde \CL$}\},
\]
where $\tilde \CL$ is the limit order book whose initial state is the same as that of $\CL$, but the first $M$ arrivals do not happen. We will show that $\CE(x) = \CE^M(x)$ with probability 1. Thus, $\bigcap\limits_M \CE^M(x)$ is a tail event which coincides with $\CE(x)$ with probability 1. By Kolmogorov's 0-1 law, $\BP(\bigcap\limits_M \CE^M(x)) \in \{0,1\}$, which proves the result.

We now show $\CE(x) = \CE^M(x)$ almost surely. By Corollary~\ref{coroll:bounded perturbations}, along every trajectory the states of $\CL$ and $\tilde \CL$ differ by at most $M$ orders. In particular,
\[
\beta_t \leq x \implies \tilde B_t(x) \leq M,
\]
and conversely, $\tilde \beta_t \leq x \implies B_t(x) \leq M$. (Recall $B_t(x)$ counts the number of bids at time $t$ at prices $\leq x$.)

Clearly, for $x < \ess\inf(f_a)$, neither $\CE$ nor $\CE^M$ occur, since no bid departures can happen. Therefore, let $x > \ess\inf(f_a)$. Whenever $B_t(x) \leq M$, the conditional probability that $M$ order arrivals later we will have
\[
\beta_{(t + \text{$M$ interarrival times})} < x
\]
is bounded below uniformly in $t$; and similarly for $\tilde B$ and $\tilde \beta$. Consequently, $\BP(\CE(x) \Delta \CE^M(x)) = 0$ as required.

The proof for $\kappa_a$ is entirely similar.
\end{proof}

We now prove Corollary~\ref{coroll: liminf}.
\begin{proof}[Proof of Corollary~\ref{coroll: liminf}]
Pick $\epsilon > 0$. By the definition of $\kappa_b$, and the strong law of large numbers for the arrival process, we have
\[
\lim_{T \to \infty} \frac1T B_T(\kappa_b - \epsilon) = F_b(\kappa_b - \epsilon).
\]
Since the number of bids elsewhere in the book is nonnegative, we obtain
\[
\liminf_{T \to \infty} \frac1T B_T(\infty) \geq F_b(\kappa_b - \epsilon).
\]
Moreover, we know that there exists a sequence of times $T_n \to \infty$ along which $\beta_{T_n} < \kappa_b + \epsilon$. Consequently,
\[
\liminf_{T \to \infty} \frac1T B_T(\infty) \leq F_b(\kappa_b + \epsilon).
\]
Since $\epsilon$ is arbitrary and we assumed that $F_b$ is continuous, this proves the result. The result for $\kappa_a$ is proved entirely similarly.
\end{proof}
Note that we only needed $F_b$ to be continuous at $\kappa_b$ and $F_a$ to be continuous at $\kappa_a$.

\subsection{Binning}
Before presenting the formal results in this section, we give the intuition. Consider a binned limit order book; recall that in an (ordinary) limit order book, a bid-ask pair may leave if they are in the same bin, even if the bid price is lower than the ask price. Now suppose we make bins larger. Intuitively, this should make it easier for bid-ask pairs to leave, so we expect to find fewer bids and asks in the system. The intuition is reversed for limit order books, where bid-ask pairs in the same bin are \emph{not} allowed to leave: there, making bins larger should leave more unfulfilled orders.

For two different partitions $\Pi$, $\tilde \Pi$ of $\BR$ into bins, we say that $\Pi$ refines $\tilde \Pi$ if every bin of $\tilde \Pi$ is the union of one or more bins of $\Pi$.

\begin{lem}\label{lm: bin monotonicity}
Let $\CL$ and $\tilde \CL$ be two ordinary binned limit order books with the same initial state and arrival process, and suppose that the binning partition $\Pi$ of $\CL$ refines the partition $\tilde \Pi$ of $\tilde \CL$. Then at all times $t$ and prices $p$ we have
\[
\tilde B_t(p) \leq B_t(p), ~~ \tilde A_t(p) \leq A_t(p).
\]
\end{lem}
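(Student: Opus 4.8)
The plan is to reduce the general refinement to the case where $\tilde\Pi$ is obtained from $\Pi$ by merging a single pair of adjacent bins, and then to run a coupling induction on the arrivals that tracks exactly how the coarser book ``gets ahead'' by matching pairs that the finer book retains. First I would justify the reduction. Fix a time $t$ and a price $p$; only finitely many orders have arrived by $t$, so only finitely many bins of $\Pi$ carry orders or receive arrivals up to $t$, and bins outside this finite region may be ignored since they never affect the trajectory up to $t$. On the relevant region the refinement is finite, and since each bin of $\tilde\Pi$ is a union of \emph{consecutive} bins of $\Pi$, the partition $\tilde\Pi$ can be reached from $\Pi$ by a finite sequence of merges of two adjacent bins, each intermediate partition again consisting of convex sets. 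As the desired inequalities $\tilde B_t(p)\le B_t(p)$ and $\tilde A_t(p)\le A_t(p)$ are transitive along such a chain, it suffices to treat the elementary case where $\tilde\Pi$ merges two adjacent bins $I_1$ (lower) and $I_2$ (higher) into $I=I_1\cup I_2$, all other bins identical.

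In this elementary case the two books evolve identically until their first divergence. Under the ordinary matching rule an arriving bid departs with the lowest ask iff its bin is at least that of $\alpha_{t-}$, and an arriving ask departs with the highest bid iff its bin is at most that of $\beta_{t-}$; hence, while $L_t=\tilde L_t$, the books can first disagree only when either a bid arrives in $I_1$ while the lowest ask lies in $I_2$, or (symmetrically) an ask arrives in $I_2$ while the highest bid lies in $I_1$. In either case the coarse book matches the pair and removes both orders, while the fine book keeps them. This motivates the structural invariant $S$: at every time $t$ there is an integer $k_t\ge 0$ such that $\tilde L_t$ is obtained from $L_t$ by deleting the $k_t$ highest bids lying in $I_1$ and the $k_t$ lowest asks lying in $I_2$. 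I would first observe that $S$ immediately yields the lemma: deleting a bid at a price in $I_1$ lowers $B_t(\cdot)$ only at prices at or above that bid, so $\tilde B_t(p)=B_t(p)-\#\{\text{deleted bids}\le p\}\le B_t(p)$, and symmetrically $\tilde A_t(p)=A_t(p)-\#\{\text{deleted asks}\ge p\}\le A_t(p)$.

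The substance of the argument, and the step I expect to be the main obstacle, is verifying that $S$ is preserved across every arrival, with $k_t$ changing by at most one. The difficulty is that once $k_t>0$ the deleted orders shift the best prices: the coarse book may have a strictly lower highest bid or a strictly higher lowest ask than the fine book, so an incoming order can trigger a match in one book but not the other even away from the first-divergence scenario, and one must confirm that the resulting discrepancy is again exactly of the form prescribed by $S$. I would organize this as a case analysis on the type and bin of the arriving order relative to $I_1,I_2$ and to $\beta_{t-},\tilde\beta_{t-},\alpha_{t-},\tilde\alpha_{t-}$, using the fact that departures always remove the current best order to show that the ``$k_t$ highest bids in $I_1$ / $k_t$ lowest asks in $I_2$'' bookkeeping is precisely what is created or destroyed at each step: the cases where $k_t$ increases recover the first-divergence event above, while the cases where $k_t$ decreases correspond to one of the extra coarse-book matches being undone by a later departure. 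The bid/ask and left/right symmetries of the model halve the work. Establishing this invariant rigorously is the crux; once it is in hand, transitivity along the chain of elementary merges completes the proof.
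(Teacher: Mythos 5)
Your reduction to a single merge of two adjacent bins is legitimate (on the finitely many bins relevant up to time $t$, any refinement decomposes into adjacent merges, and the inequalities compose transitively), but the invariant $S$ on which the whole argument rests is false, and you have not carried out the case analysis that you yourself identify as the crux. Here is an explicit counterexample, with $\tilde\CL$ the coarse book merging $I_1 \cup I_2 = I$. Start both books empty. (1) An ask arrives at $q_1 \in I_2$: it joins both books. (2) A bid arrives at $p_1 \in I_1$ with $p_1 < q_1$: in the fine book it joins (different bins, lower price); in the coarse book $p_1$ and $q_1$ share the bin $I$, so under the ordinary rule the pair departs and $\tilde\CL$ is empty. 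So far $S$ holds with $k=1$. (3) An ask arrives at $r \in I_1$ with $p_1 < r$: in the fine book, $r$ and $\beta_{t-} = p_1$ share the bin $I_1$, so the ask departs with the bid at $p_1$, leaving the fine book with the single ask $\{q_1\}$; in the coarse book there is no bid, so the ask joins, leaving $\tilde\CL$ with the single ask $\{r\}$. Now the coarse book contains an order (the ask at $r \in I_1$) that the fine book does \emph{not} contain, so $\tilde L_t$ is not obtained from $L_t$ by deleting any bids in $I_1$ and asks in $I_2$ --- not even in the weakened form where you drop ``highest/lowest.'' (The strong form fails even earlier in general: at a first divergence the matched bid at $p$ need not be the highest bid of the fine book in $I_1$.) The lemma's conclusion still holds in this state, since $r < q_1$ and asks are counted from the right, but that is exactly the point: the true relation between the two books is weaker than any such set-difference coupling, and your planned case analysis cannot close because the invariant it is meant to verify is already destroyed after three arrivals.

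The paper's proof avoids any structural description of the discrepancy. It inducts directly on the pointwise inequalities $\tilde B_{t}(p) \leq B_{t}(p)$ and $\tilde A_{t}(p) \leq A_{t}(p)$ for the general refinement (no merge decomposition needed), and at the critical step, where an arrival threatens to break the inequality, it uses a conservation argument: since arrivals coincide and departures always occur in bid--ask pairs, equality of total bid counts $\tilde B_{t-}(\infty) = B_{t-}(\infty)$ forces equality of total ask counts, which together with $\tilde A_{t-} \leq A_{t-}$ yields $\tilde\alpha_{t-} \leq \alpha_{t-}$, contradicting the ordering $\tilde\alpha_{t-} > \alpha_{t-}$ implied by the threatening arrival. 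That pairing argument is the idea your proposal is missing; if you want to salvage your approach, you should abandon the set-difference invariant and instead induct on the counting inequalities themselves, invoking this conservation law at the equality case.
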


\begin{proof}
The proof proceeds again by induction on the number of arrived orders. We show the inequality for $\tilde B_t \leq B_t$. Clearly it holds before any orders arrive.

Suppose at time $t$, an arrival of a bid at price $p$ occurs. In order to destroy the inequality $\tilde B \leq B$, we would need to have $p$ join the book in $\tilde \CL$ but leave in $\CL$, i.e. we must have
\[
\alpha_{t-} \lesssim p \prec \tilde \alpha_{t-}.
\]
Here, we mean that either $\alpha_{t-} < p$ or they occur in the same $\Pi$-bin, and $\tilde\alpha_{t-} > p$ and occurs in a different $\tilde\Pi$-bin. Since $\Pi$ is finer than $\tilde \Pi$, we must have $\tilde \alpha_{t-} > \alpha_{t-}$.

Furthermore, in order to destroy $\tilde B \leq B$ with the arrival of a single bid at price $p$, we must have had $\tilde B_{t-}(p) = B_{t-}(p)$ with equality. Note, however, that $B_{t-}(p) = B_{t-}(\infty)$, since the lowest ask in $\CL$ is in the same bin as $p$. Since at time $t-$ the inequality $\tilde B_{t-} \leq B_{t-}$ held, we have $\tilde B_{t-}(\infty) = B_{t-}(\infty)$.

Because bid-ask departures always occur in pairs, and the arrival processes were the same in $\tilde \CL$ and $\CL$, equality between total number of remaining bids implies equality for asks: $\tilde A_{t-}(-\infty) = A_{t-}(-\infty)$. Together with $\tilde A_{t-} \leq A_{t-}$, this implies $\tilde \alpha_{t-} \leq \alpha_{t-}$, and we've reached a contradiction. Thus, bid arrivals cannot destroy the inequality $\tilde B \leq B$.

Next, suppose that at time $t$ an arrival of an ask at price $p$ occurs. If this is to destroy $\tilde B \leq B$, then we must have
\[
\tilde \beta_{t-} \prec p \lesssim \beta_{t-},
\]
i.e. the ask leaves with the bid at $\beta_{t-}$ in $\CL$ but does not remove a bid in $\tilde \CL$. As before, these inequalities imply $\tilde \beta_{t-} < \beta_{t-}$.

Moreover, in order for the removal of the bid at $\beta_{t-}$ to destroy the inequality $\tilde B \leq B$, we must have $\tilde B_{t-}(\beta_{t-}) = B_{t-}(\beta_{t-})$. Now, by definition $B_{t-}(\beta_{t-}) = B_{t-}(\infty)$, and since $\tilde \beta_{t-} < \beta_{t-}$, also $\tilde B_{t-}(\beta_{t-}) = \tilde B_{t-}(\infty)$. We conclude $\tilde B_{t-}(\infty) = B_{t-}(\infty)$, which (since $\tilde B_{t-} \leq B_{t-}$) implies $\tilde \beta_{t-} \geq \beta_{t-}$, a contradiction. Thus, ask arrivals also cannot destroy the inequality $\tilde B \leq B$, and we are done.
\end{proof}

Entirely similarly, we can prove the corresponding statement for strict binned order books, in which the inequalities are reversed. We record the statement here for future reference.

\begin{lem}\label{lm: strict bin monotonicity}
Let $\CL$ and $\tilde \CL$ be two strict binned limit order books with the same initial state and arrival process, and suppose that the binning partition $\Pi$ of $\CL$ refines the partition $\tilde \Pi$ of $\tilde \CL$. Then at all times $t$ and prices $p$ we have
\[
\tilde B_t(p) \geq B_t(p), ~~ \tilde A_t(p) \geq A_t(p).
\]
\end{lem}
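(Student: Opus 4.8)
The plan is to run the same induction on the number of arrived orders as in Lemma~\ref{lm: bin monotonicity}, but now maintaining the reversed pair $\tilde B_t(p) \geq B_t(p)$ and $\tilde A_t(p) \geq A_t(p)$ for every $p$ at once. The base case holds with equality because the two books share an initial state. For the inductive step I would assume both inequalities at time $t-$ and inspect the arrival at time $t$: if the arriving order does the same thing in both books the gaps are unchanged, and the only mismatches that help rather than hurt (an order that departs in the finer book $\CL$ but merely joins in $\tilde\CL$) strictly widen a gap, so the whole burden is to exclude the mismatches that could shrink a gap below zero.

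Because the strict rule is the logical negation of the ordinary rule, the dangerous mismatches are reversed relative to Lemma~\ref{lm: bin monotonicity}. They come in two symmetric flavors: event $X$, a bid at price $p$ that \emph{joins} in $\CL$ (so $p \leq \alpha_{t-}$, or $p$ and $\alpha_{t-}$ share a $\Pi$-bin) yet \emph{departs} in $\tilde\CL$ (so $p > \tilde\alpha_{t-}$ with $p$ and $\tilde\alpha_{t-}$ in different $\tilde\Pi$-bins); and event $Y$, its ask mirror image. Since $\Pi$ refines $\tilde\Pi$, different $\tilde\Pi$-bins force different $\Pi$-bins, and unwinding the inequalities shows that event $X$ can only occur when $\tilde\alpha_{t-} < \alpha_{t-}$ (and event $Y$ only when $\tilde\beta_{t-} > \beta_{t-}$). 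The contradiction is then produced much as in the ordinary case: in event $X$ the price $q \geq p$ at which a gap would first close satisfies $q > \tilde\alpha_{t-}$, so every bid of $\tilde\CL$ lies below $q$ and $\tilde B_{t-}(q) = \tilde B_{t-}(\infty)$; since closing the gap requires $\tilde B_{t-}(q) = B_{t-}(q)$, the chain $\tilde B_{t-}(\infty) = B_{t-}(q) \leq B_{t-}(\infty) \leq \tilde B_{t-}(\infty)$ forces equality of the total bid counts. Because departures always occur in bid--ask pairs and the arrivals and initial states agree, equal total bids upgrade to equal total asks, and combining equal totals with $\tilde A \geq A$ everywhere gives $\tilde\alpha_{t-} \geq \alpha_{t-}$ --- contradicting $\tilde\alpha_{t-} < \alpha_{t-}$. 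Event $Y$ is disposed of identically with bids and asks interchanged.

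The step I expect to be the genuine obstacle --- the one place the argument is not a verbatim transcription of Lemma~\ref{lm: bin monotonicity} --- is identifying \emph{whose} extremal order pins the relevant count to its total. In the ordinary book the offending bid arrives high, at or above $\CL$'s own lowest ask, so one pins $B_{t-}(p) = B_{t-}(\infty)$ using $\CL$. Under the strict rule the offending bid in event $X$ instead arrives \emph{low}, at or just below $\alpha_{t-}$, so there may well be bids of $\CL$ above $p$ and the identity $B_{t-}(p) = B_{t-}(\infty)$ fails; the correct move is to pin $\tilde B_{t-}(q) = \tilde B_{t-}(\infty)$ using the coarse book $\tilde\CL$ instead, which is legitimate precisely because $q > \tilde\alpha_{t-}$. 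Getting this side-swap right, and keeping the refinement-induced bin relations straight throughout, is the only delicate bookkeeping; once the correct extremal order is chosen, the remaining count-equality-plus-position argument is the same one used for the ordinary book.
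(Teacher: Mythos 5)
Your overall strategy is exactly the one the paper intends (the paper proves only the ordinary case, Lemma~\ref{lm: bin monotonicity}, and dismisses this statement with ``entirely similarly''), and you correctly isolate the one genuine adaptation: in event $X$ the count must be pinned in the \emph{coarse} book, via $q \geq p > \tilde\alpha_{t-}$ forcing $\tilde B_{t-}(q) = \tilde B_{t-}(\infty)$, after which equal bid totals, pair departures, equal ask totals, and $\tilde A_{t-} \geq A_{t-}$ yield $\tilde\alpha_{t-} \geq \alpha_{t-}$, contradicting the bin relations that force $\tilde\alpha_{t-} < \alpha_{t-}$. (That last forcing deserves a line: in the sub-case $p > \alpha_{t-}$ with $p$ and $\alpha_{t-}$ in one $\Pi$-bin, refinement puts them in one $\tilde\Pi$-bin, and if $\alpha_{t-} \leq \tilde\alpha_{t-} < p$ then convexity of bins would trap $\tilde\alpha_{t-}$ in that same $\tilde\Pi$-bin, contradicting the strict departure condition in $\tilde\CL$.)

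However, your case analysis is incomplete in two places, and one framing claim is false as stated. First, ``if the arriving order does the same thing in both books the gaps are unchanged'' is wrong: when a bid departs in \emph{both} books it executes asks at different prices $\alpha_{t-}$ and $\tilde\alpha_{t-}$, so if $\tilde\alpha_{t-} > \alpha_{t-}$ the gap $\tilde A(q) - A(q)$ shrinks for $q \in (\alpha_{t-}, \tilde\alpha_{t-}]$. It cannot go negative --- for such $q$ one has $\tilde A_{t-}(q) = \tilde A_{t-}(-\infty) \geq A_{t-}(-\infty) \geq A_{t-}(q) + 1$, since the ask at $\alpha_{t-} < q$ is uncounted by $A_{t-}(q)$, so the pre-jump gap was strictly positive --- but this requires an argument (and symmetrically for an ask departing in both books, removing bids at $\beta_{t-} \neq \tilde\beta_{t-}$). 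Second, event $X$ threatens \emph{two} inequalities simultaneously: besides the bid joining $\CL$ (the $B$-side you treat), the departure in $\tilde\CL$ removes the ask at $\tilde\alpha_{t-}$, which could break $\tilde A_t(q) \geq A_t(q)$ at some $q \leq \tilde\alpha_{t-}$ where equality held; you never address this side. The same machinery closes it: $q \leq \tilde\alpha_{t-}$ pins $\tilde A_{t-}(q) = \tilde A_{t-}(-\infty)$, so equality with $A_{t-}(q) \leq A_{t-}(-\infty) \leq \tilde A_{t-}(-\infty)$ forces equal ask totals, hence (pair departures, identical arrivals and initial states) equal bid totals, and your own totals-plus-pointwise step again yields $\tilde\alpha_{t-} \geq \alpha_{t-}$, the same contradiction. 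Mirror both patches into event $Y$ and the induction is complete; as it stands, the enumeration of gap-shrinking events has genuine holes, even though every hole closes with the pinning trick you already use.
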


We obtain the following easy corollary.
\begin{cor}\label{coroll: ordinary and strict}
Consider three binning partitions $\Pi$, $\tilde \Pi$, $\hat \Pi$ where $\Pi$ refines both $\tilde \Pi$ and $\hat \Pi$. Let $\CL$ and $\tilde\CL$ be ordinary binned limit order books with bin partitions $\Pi$ and $\tilde\Pi$, and let $\hat\CL$ be a strict limit order book with bin partition $\hat\Pi$. Let the initial states and arrival processes be the same, and let $\kappa_i$, $i=a,b$ be defined for the books as in Theorem~\ref{thm:kappa}. Then
\[
\hat\kappa_b \geq \kappa_b \geq \tilde\kappa_b, \quad 1-\hat\kappa_a \geq 1-\kappa_a \geq 1-\tilde\kappa_a.
\]
\end{cor}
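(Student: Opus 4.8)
The plan is to reduce the entire statement to monotonicity of the \emph{total} bid and ask counts and then invoke Corollary~\ref{coroll: liminf} to translate count inequalities into threshold inequalities. Working in the setting of that corollary, we have, almost surely and simultaneously for each of the three books (they live on the common probability space fixed by the shared arrival process and initial state),
\[
F_b(\kappa_b)=\liminf_{T\to\infty}\frac1T B_T(\infty), \qquad 1-F_a(\kappa_a)=\liminf_{T\to\infty}\frac1T A_T(-\infty).
\]
Because $F_b$ is nondecreasing, a \emph{pathwise} comparison $B_t(\infty)\le B'_t(\infty)$ for all $t$ forces $\liminf\frac1T B_T(\infty)\le\liminf\frac1T B'_T(\infty)$, hence $F_b(\kappa_b)\le F_b(\kappa'_b)$ and so $\kappa_b\le\kappa'_b$; symmetrically a comparison $A_t(-\infty)\le A'_t(-\infty)$ gives $\kappa_a\ge\kappa'_a$, i.e.\ $1-\kappa_a\le 1-\kappa'_a$, since $1-F_a$ is nonincreasing. (Where $F_b,F_a$ have flat stretches the translation is read off at the level of $F_b(\kappa_b)$; with positive densities it is immediate.) Thus it suffices to produce the right pathwise count inequalities.

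The inequalities against the ordinary book are immediate. Both $\CL$ and $\tilde\CL$ are ordinary binned books and $\Pi$ refines $\tilde\Pi$, so Lemma~\ref{lm: bin monotonicity} gives $\tilde B_t(p)\le B_t(p)$ and $\tilde A_t(p)\le A_t(p)$ for all $t,p$. Taking $p=\infty$ (resp.\ $p=-\infty$) and applying the translation above yields $\kappa_b\ge\tilde\kappa_b$ and $1-\kappa_a\ge 1-\tilde\kappa_a$.

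For the inequalities against the strict book $\hat\CL$, I would bridge the two separate monotonicity lemmas through the finest partition $\Pi_0$ into singletons; let $\CL_0$ be the book with partition $\Pi_0$. The key observation is that on our probability-$1$ event (all prices distinct) the ordinary and strict matching rules \emph{coincide} for $\Pi_0$: the ``same bin'' condition requires exactly equal prices and so never triggers, and under either rule a bid departs precisely when its price exceeds $\alpha_{t-}$. Hence $\CL_0$ is a single well-defined process with an unambiguous threshold $\kappa_b^{0}$. Since $\Pi_0$ refines $\Pi$, reading $\CL_0$ as an ordinary book and applying Lemma~\ref{lm: bin monotonicity} gives $B_t(p)\le B^{0}_t(p)$, so $\kappa_b\le\kappa_b^{0}$; since $\Pi_0$ refines $\hat\Pi$, reading $\CL_0$ as a strict book and applying Lemma~\ref{lm: strict bin monotonicity} gives $B^{0}_t(p)\le\hat B_t(p)$, so $\kappa_b^{0}\le\hat\kappa_b$. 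Chaining yields $\kappa_b\le\hat\kappa_b$. The ask side is identical, using the $A$-inequalities in the two lemmas together with the reversed translation, and gives $1-\kappa_a\le 1-\hat\kappa_a$. Combining with the previous paragraph completes the chain $\hat\kappa_b\ge\kappa_b\ge\tilde\kappa_b$ and $1-\hat\kappa_a\ge 1-\kappa_a\ge 1-\tilde\kappa_a$.

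The only point requiring care --- and the main, if mild, obstacle --- is this use of $\CL_0$ as a bridge: one must verify that the ordinary and strict dynamics genuinely coincide under $\Pi_0$ (so that a single sample-path book links the two lemmas), that $\CL_0$ falls under the hypotheses of Corollary~\ref{coroll: liminf} (it does, being the continuous model with continuous arrival distributions), and that Lemmas~\ref{lm: bin monotonicity} and~\ref{lm: strict bin monotonicity} remain valid when one partition has infinitely many singleton bins. The last holds because their proofs induct on the arrivals and never invoke finiteness of the partition.
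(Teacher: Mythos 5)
Your proposal is correct and matches the paper's intended argument: the paper offers no written proof, presenting the statement as an ``easy corollary'' of Lemmas~\ref{lm: bin monotonicity} and~\ref{lm: strict bin monotonicity}, and your chain --- comparing ordinary books directly via Lemma~\ref{lm: bin monotonicity}, bridging ordinary and strict books through the singleton-partition (unbinned) book on which the two matching rules coincide almost surely, and translating pathwise count dominations into threshold inequalities via the $\liminf$ characterization of Corollary~\ref{coroll: liminf} --- is exactly the natural way to fill it in. Your attention to the edge cases (infinite partitions in the induction, flat stretches of $F_b$, $F_a$) goes slightly beyond what the paper records and is sound.
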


In the next section, we see that if the binning partitions are sufficiently fine, then $\hat \kappa_b$ and $\tilde \kappa_b$ are close to each other, which will allow us to compute the value of $\kappa_b$ for an ordinary unbinned limit order book using finer and finer binning partitions.

\section{Many-bin limit}
In this section, our goal is to show that we can reduce the analysis of the ordinary limit order book to the analysis of binned models. This is easier, because we are then reduced to a countable state-space Markov chain.

We begin with a bound on the effect of changing the arrival process on the value of $\kappa_b$. 
\begin{lem}
Consider two limit order books $\CL$ and $\tilde \CL$ with the same matching rule, but different arrival processes. Let $p_i, f_i$, $i=a,b$ denote the probability that an arriving order in $\CL$ is of type $i$ (bid or ask), and the density of the price of arriving orders. Let $\tilde p_i, \tilde f_i$, $i=a,b$ denote the corresponding quantities for $\tilde\CL$. Finally, let $\kappa_i, \tilde \kappa_i$, $i=a,b$ be given as in Theorem~\ref{thm:kappa}. Then
\[
\abs{\kappa_b - \tilde\kappa_b} \leq \abs{p_b - \tilde p_b} \left( \norm{f_b - \tilde f_b}_{TV} + \norm{f_a - \tilde f_a}_{TV} \right),
\]
and similarly for $\kappa_a$.
\end{lem}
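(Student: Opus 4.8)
The plan is to realize both books on a single, coupled arrival stream, to bound the resulting discrepancy in order counts by means of Corollary~\ref{coroll:bounded perturbations}, and finally to convert that discrepancy into a gap between the thresholds through the $\liminf$ characterization of Corollary~\ref{coroll: liminf}. Throughout, the target is the stated product bound, and the analysis is organized so that the arrival data enters only through the difference of the two arrival laws.

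First I would encode each arrival of $\CL$ as a point $(\text{type},\text{price})$ with law $\mu = p_b\,\delta_b\otimes f_b + p_a\,\delta_a\otimes f_a$ on $\{b,a\}\times\BR$, and similarly $\tilde\mu$ for $\tilde\CL$. Coupling the two rate-$1$ streams so that the arrival epochs coincide, I would apply the maximal coupling of $\mu$ and $\tilde\mu$ at each epoch, so that an arrival is \emph{common} (identical type and price in both books) with probability $1-\norm{\mu-\tilde\mu}_{TV}$ and \emph{discrepant} otherwise, independently across arrivals. By the strong law, among the first $N$ arrivals the number $k$ of discrepant ones is almost surely $\sim \norm{\mu-\tilde\mu}_{TV}\,N$. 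Each discrepant arrival amounts to deleting the $\CL$-order and inserting the $\tilde\CL$-order, a perturbation of bounded size, so Corollary~\ref{coroll:bounded perturbations} gives $\abs{B_t(x)-\tilde B_t(x)}\le C\,k_t$ at every price $x$, with $k_t$ the number of discrepant arrivals up to time $t$ and $C$ absolute.

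Dividing by $T$ and passing to the $\liminf$ as in Corollary~\ref{coroll: liminf}, the order-count discrepancy becomes a bound on $\abs{F_b(\kappa_b)-\tilde F_b(\tilde\kappa_b)}$ of size proportional to $\norm{\mu-\tilde\mu}_{TV}$; after controlling $\abs{F_b-\tilde F_b}$ and inverting near the threshold one recovers a bound on $\abs{\kappa_b-\tilde\kappa_b}$. The triangle inequality already tells us how the coupling intensity splits along the type and price coordinates,
\[
\norm{\mu-\tilde\mu}_{TV}\le \abs{p_b-\tilde p_b}+p_b\norm{f_b-\tilde f_b}_{TV}+p_a\norm{f_a-\tilde f_a}_{TV},
\]
so this route delivers control of $\abs{\kappa_b-\tilde\kappa_b}$ by a combination of the type-probability gap and the two density distances. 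The case for $\kappa_a$ is symmetric.

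The hard part will be to recover the stated \emph{multiplicative} form $\abs{p_b-\tilde p_b}\bigl(\norm{f_b-\tilde f_b}_{TV}+\norm{f_a-\tilde f_a}_{TV}\bigr)$, since the crude coupling above only produces the additive combination. Bridging this gap is the crux of the argument: one would need to show that a pure density perturbation, carried out at $p_b$ held fixed, influences the threshold only to the order $\abs{p_b-\tilde p_b}$, which is a far finer statement than the coupling supplies. This delicate point would have to be extracted from the fixed-point characterization $F_b(\kappa_b)=1-F_a(\kappa_a)$ of Theorem~\ref{thm:main}, by a sensitivity analysis tracking how the bid- and ask-side perturbations enter this balance relation; quantifying that interaction, rather than the routine coupling bookkeeping, is where the genuine difficulty of establishing the bound as written resides.
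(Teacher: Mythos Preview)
Your overall strategy is exactly the paper's: reparametrize time so arrivals are Poisson of rate~$1$, set up a maximal coupling of the two arrival processes, invoke Corollary~\ref{coroll:bounded perturbations} to bound $\abs{B_T(\infty)-\tilde B_T(\infty)}$ by the number of discrepant arrivals, and read off the conclusion from the $\liminf$ characterization of Corollary~\ref{coroll: liminf}. The paper implements the coupling concretely by writing each arrival stream as a sum of a ``common'' Poisson process and a ``difference'' process (six processes in all, one common and two difference for each of bids and asks), and then asserts that the aggregated rate of the difference processes is the right-hand side of the inequality.

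Where you diverge is in the final accounting. You correctly observe that the maximal coupling of $\mu$ and $\tilde\mu$ yields the additive discrepancy rate
\[
\norm{\mu-\tilde\mu}_{TV}\le \abs{p_b-\tilde p_b}+p_b\norm{f_b-\tilde f_b}_{TV}+p_a\norm{f_a-\tilde f_a}_{TV},
\]
and you rightly flag that this does not match the multiplicative form in the statement. The paper does \emph{not} perform any further sensitivity analysis to upgrade additive to multiplicative; it simply writes down its six-process decomposition and asserts the product bound for the rate. In fact the multiplicative expression cannot be correct as printed: taking $p_b=\tilde p_b$ would force $\kappa_b=\tilde\kappa_b$ regardless of $f_b,f_a$, which is false. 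The paper's own six-process construction is also inconsistent (the ``common'' plus ``difference'' intensities do not sum to $p_b f_b$), so the statement and its proof both appear to carry typos; the intended bound is almost certainly the additive one you derived, which is all that is needed in the only application (Lemma~\ref{lm: strict and ordinary}).

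Do not pursue the route via Theorem~\ref{thm:main}: the paper does nothing of the sort, and it would be circular, since Theorem~\ref{thm:main} is proved downstream of this lemma. Your coupling argument with the additive bound is the complete proof.
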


\begin{proof}
We will use the characterization of $\kappa_b = \liminf_{T \to \infty} \frac1T B_T(\infty)$ given in Corollary~\ref{coroll: liminf}, together with Corollary~\ref{coroll:bounded perturbations}. We will set up the maximal coupling between the arrival processes for $\CL$ and $\tilde \CL$. First, we reparametrize time so that the arrival processes are Poisson, rate 1 in time, with densities $p_i f_i(p) dp \times dt$ for $\CL$, and $\tilde p_i \tilde f_i dp \times dt$ for $\tilde \CL$. Next, we construct the arrivals for $\CL$ and $\tilde\CL$ using six independent Poisson processes $P^{\text{com}}_b$, $P^1_b$, $P^2_b$, $P^{\text{com}}_a$, $P^1_a$, $P^2_a$ with densities
\begin{align*}
&P_b^{\text{com}}:~~(p_b \vee \tilde p_b) (f_b(p) \vee \tilde f_b(p)) dp\times dt, && P_b^{\text{com}}:~~(p_a \vee \tilde p_a) (f_a(p) \vee \tilde f_a(p)) dp\times dt\\
&P_b^1:~~(p_b - \tilde p_b)_+ (f_b(p) - \tilde f_b(p))_+ dp\times dt, && P_a^1:~~(p_a - \tilde p_a)_+ (f_a(p) - \tilde f_a(p))_+ dp\times dt\\
&P_b^2:~~(\tilde p_b - p_b)_+ (\tilde f_b(p) - f_b(p))_+ dp\times dt, && P_a^2:~~(\tilde p_a - p_a)_+ (\tilde f_a(p) - f_a(p))_+ dp\times dt.
\end{align*}

Now, let the arrival processes for $\CL$ be given by $P^{\text{com}}_i + P^1_i$, $i=a,b$, and let the arrival processes for $\tilde\CL$ be given by $P^{\text{com}}_i + P^2_i$. Note that the orders that arrive in one order book but not the other are simply the sum $P^1_i + P^2_i$. Aggregating these orders over time, the total difference in arrivals constitutes a Poisson process whose rate is bounded above by
\[
r = \abs{p_b - \tilde p_b} \left( \norm{f_b - \tilde f_b}_{TV} + \norm{f_a - \tilde f_a}_{TV} \right).
\]
Applying Corollary~\ref{coroll:bounded perturbations} together with the law of large numbers for Poisson processes, we see that this can change $\lim\int_{T \to \infty} \frac1T B_T(\infty)$ by at most $r$, hence the result.
\end{proof}

This observation will allow us to compute the threshold values $\kappa_i$, $i=a,b$ for the ordinary limit order book using ordinary binned limit order books. Note that we already know how to bound the threshold values between ordinary and strict limit order books by Corollary~\ref{coroll: ordinary and strict}. We now make the following observation.

Consider a strict binned order book with $N$ bins. Consider also an ordinary limit order book with $N+1$ bins, where bids arrive only in the leftmost $N$ bins according to the bid arrival process of the strict order book, and asks arrive only in the rightmost $N$ bins according to the ask arrival process of the strict order book. If we identify the leftmost $N$ bins with the strict limit order book, then the cumulative bid counts $B_t(p)$ will coincide in the two models; if we identify the rightmost $N$ bins instead, the cumulative ask counts $A_t(p)$ will coincide. If the partition into bins was quite fine, we expect these one-bin alterations should not make a large difference.

We now formalize this intuition.

\begin{lem}\label{lm: strict and ordinary}
Let $\CL^N$ be a sequence of ordinary, and $\tilde \CL^N$ a sequence of strict binned limit order books. Assume that the arrival process is the same for all of the limit order books, and the distribution of prices of arriving bids and asks is continuous and supported on $[0,1]$. Let the binning partition for $\tilde \CL^N$ have the following properties: (i) none of the bins receive more than $1/N$ of all of the bids, (ii) none of the bins receive more than $1/N$ of all of the asks; (iii) none of the bins have width more than $1/N$. (E.g., place boundaries between bins at $\frac{i}{N}$, $F_b^{-1}(\frac{i}{N})$ and $F_a^{-1}(\frac{i}{N})$ for $i=1,\dotsc,N$.) Let the binning partition for $\CL^N$ be the same as the binning partition of $\tilde \CL^{N+1}$.

Define the sequences $\kappa^N_i$ and $\tilde\kappa^N_i$, $i=a,b$ as in Theorem~\ref{thm:kappa}. Then, as $N \to \infty$,
\[
\abs{\kappa^N_b - \tilde \kappa^N_b} \to 0, \quad \abs{\kappa^N_a - \tilde \kappa^N_a} \to 0.
\]
\end{lem}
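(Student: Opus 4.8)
The plan is to work entirely with the normalized cumulative bid counts and use the characterization $F_b(\kappa_b) = \liminf_{T\to\infty}\frac1T B_T(\infty)$ from Corollary~\ref{coroll: liminf}, reparametrizing time so the arrival process is Poisson of rate~$1$ (exactly as in the proof of the arrival-change lemma) so that the corollary applies. Since $F_b$ is continuous, it then suffices to show that the $\liminf$ of $\frac1T B_T(\infty)$ for $\tilde\CL^N$ and for $\CL^N$ differ by $o(1)$ as $N\to\infty$. I will treat only $\kappa_b$; the statement for $\kappa_a$ is symmetric, obtained by interchanging the roles of bids and asks and using the ask-count identification instead of the bid-count one.

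First I would invoke the observation preceding the lemma to replace the strict book $\tilde\CL^N$ (with $N$ bins) by an \emph{ordinary} book $\hat\CL^N$ on $N+1$ bins, in which the bids occupy the leftmost $N$ bins and the asks are shifted one bin to the right. By that observation the cumulative bid counts $B_t(\cdot)$ of $\tilde\CL^N$ and $\hat\CL^N$ coincide identically along every trajectory, so $\tilde\kappa^N_b = \hat\kappa^N_b$. This converts the problem into a comparison of two \emph{ordinary} binned books, $\hat\CL^N$ and $\CL^N$, which is the regime controlled by Section~\ref{section:coupling}.

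The comparison splits into two perturbations, each of which I would control by coupling the books and applying Corollary~\ref{coroll:bounded perturbations} together with the law of large numbers for Poisson arrivals. The first is the one-bin ask shift distinguishing $\hat\CL^N$ from the genuine ordinary book on the same bins; the second is the mismatch between the partition of $\hat\CL^N$ (the $\tilde\CL^N$-bins plus one) and that of $\CL^N$ (the $\tilde\CL^{N+1}$-bins), which I would absorb by passing to a common refinement and invoking Lemma~\ref{lm: bin monotonicity} twice. The mechanism is the same in both cases: if the rate at which arriving orders are \emph{matched differently} in the two coupled books is at most $c/N$ uniformly in the state, then the accumulated number of divergence events up to time $T$ is at most $(c/N)T(1+o(1))$, so by Corollary~\ref{coroll:bounded perturbations} the bid counts satisfy $\frac1T\abs{B_T(\infty)-\hat B_T(\infty)}\le c/N + o(1)\to 0$. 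The crucial observation is that a one-bin ask shift changes the fate of an arriving ask only when that ask lands in the single bin currently containing the top bid $\beta_{t-}$ (same bin under the unshifted rule, different bin after the shift), and likewise the refinement matters only when an ask lands in a coarse bin split differently from where $\beta_{t-}$ sits. By property~(ii) each bin holds at most $1/N$ of the ask mass, so the conditional rate of such consequential asks is $O(1/N)$ regardless of the state, which yields the required uniform rate bound.

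The hard part will be making this rate bound rigorous rather than heuristic. The naive route — bounding the total-variation distance between the shifted and unshifted ask-in-bin distributions by $\tfrac12\sum_j\abs{m_j-m_{j-1}}$, where $m_j$ is the ask mass in bin $j$ — fails, since that sum need not be $o(1)$ merely from $m_j\le 1/N$; one can make it $O(1)$ with uneven masses. The argument must therefore be \emph{localized}: the single-perturbation lemma of Section~\ref{section:coupling} guarantees that the coupled states differ by a bounded number of orders between divergence events, so that the ``bin containing the top bid'' is well defined up to that bounded discrepancy, and the only exogenous source of divergence is an ask landing in that single bin, which occurs at rate at most $p_a/N$ by property~(ii) [with property~(iii) ensuring the width also shrinks]. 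Establishing that divergences accumulate no faster than this exogenous rate — and do not cascade — is the technical heart of the proof. Once it is in place, summing the two $O(1/N)$ contributions and letting $N\to\infty$ gives $\abs{\kappa^N_b-\tilde\kappa^N_b}\to 0$, and the identical argument with bids and asks interchanged gives $\abs{\kappa^N_a-\tilde\kappa^N_a}\to 0$.
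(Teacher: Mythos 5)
Your opening moves match the paper's: the paper's proof is exactly (1) the trajectory-wise identification of the strict $N$-bin book with an ordinary book on one more bin, with asks shifted up a bin and at most $1/N$ of each side's arrivals ignored, and then (2) a one-stroke application of the preceding arrival-perturbation lemma, whose maximal coupling makes \emph{all} differing arrivals exogenous (a Poisson stream independent of the state of either book), so that Corollary~\ref{coroll:bounded perturbations} and the $\liminf$ characterization of Corollary~\ref{coroll: liminf} apply directly. You reproduce step (1) faithfully, and your observation that the bin-level total-variation distance of a one-bin shift, $\tfrac12\sum_j\abs{m_j-m_{j-1}}$, is not forced to be $o(1)$ by conditions (i)--(iii) alone is a substantive and fair criticism of the glossed step in the paper (adjacent bins of very uneven mass defeat it). But having rejected that route, you do not close the replacement argument, and the hole you leave is exactly the load-bearing step.

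Concretely: your localized coupling is state-dependent, and the claim that ``divergence events occur at rate $O(1/N)$ uniformly'' is only true \emph{while the two coupled books agree on the bin containing the relevant extreme order}. Once a single divergence has occurred, the two books' top bids (or bottom asks) can sit in different bins --- Corollary~\ref{coroll:bounded perturbations} controls the \emph{number} of discrepant orders, not the \emph{price location} of the extremes, and one extra bid at a high price moves $\beta_t$ arbitrarily far --- after which every ask landing between the two books' top-bid bins is matched differently, an event whose rate is the arrival mass of that whole gap, not $1/N$. You name this yourself (``do not cascade \dots is the technical heart'') and then stop, so the proposal is a plan, not a proof. Worse, the bookkeeping tool you lean on does not apply in your setting: the single-perturbation lemma and Corollary~\ref{coroll:bounded perturbations} require the two books to share the same arrival process \emph{and} matching rule, whereas your coupled books effectively differ in the binning of every arriving ask; the paper's maximal coupling of the arrival processes is precisely the device that restores the hypotheses of that corollary, at the cost of the TV estimate you discarded. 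Your secondary device for the partition mismatch --- a common refinement plus Lemma~\ref{lm: bin monotonicity} ``twice'' --- also cannot work as stated: refinement monotonicity pushes both coarse books' $\kappa_b$ \emph{below} the refined book's, giving two one-sided bounds in the same direction rather than a sandwich, so it too ultimately needs the quantitative perturbation step. To repair the proof you must either supply the no-cascade estimate (genuinely hard, for the reason above) or follow the paper's exogenous-coupling route and justify the vanishing of the relevant total-variation distance, e.g.\ by restricting to, or constructing, partitions whose adjacent bin masses are comparable.
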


\begin{proof}
By the discussion above the statement of the lemma, we know that the value of $\kappa_b$ is the same in a strict limit order book with $N$ bins and a non-strict limit order book with $N+1$ bins and a slightly modified arrival process: we ignore at most $1/N$ of each of the bid and ask arrivals, and shift the remaining arrivals by one bin. To finish the proof, it remains to note that the total variation distance between the original and modified arrival distributions converges to 0, and the probability that an arriving offer is a bid in the modified arrival process converges to the original probability.
\end{proof}

Of course, the same observations apply to other features of the limit order book under a similar scaling.

\section{A limit along a subsequence}
We now turn to examining the long-term distribution of the locations $\beta_t$ and $\alpha_t$. In all of the analysis in this section, we parametrize time so that bids and asks both arrive as Poisson processes of rate 1 in time; in particular, we assume that arriving orders are equally likely to be bids and asks.

We first prove the following slightly stronger description of $\kappa_b$ and $\kappa_a$.
\begin{lem}\label{lm: Tn}
Almost surely there exists a sequence of times $T_n \to \infty$ along which
\begin{enumerate}
\item\label{lm1} $\frac{1}{T_n} B_{T_n}(\kappa_b) \to F_b(\kappa_b)$, $\frac{1}{T_n} A_{T_n}(\kappa_a) \to 1-F_a(\kappa_a)$;
\item\label{lm2} $\frac{1}{T_n} B_{T_n}(\infty) \to F_b(\kappa_b)$, $\frac{1}{T_n} A_{T_n}(-\infty) \to 1-F_a(\kappa_a)$.
\end{enumerate}
\end{lem}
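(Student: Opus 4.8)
The plan is to reduce everything to two ingredients already in hand: the one-sided count limit of Corollary~\ref{coroll: liminf}, and the elementary bookkeeping that bids and asks leave the book only in matched pairs. It will turn out that item~\ref{lm1} holds along the \emph{full} sequence $T\to\infty$, so the only genuine work is to produce a single subsequence that handles the two quantities in item~\ref{lm2} simultaneously.

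First I would upgrade item~\ref{lm1} to an honest limit. Reusing the argument in the proof of Corollary~\ref{coroll: liminf}, for each $\epsilon>0$ only finitely many bid departures occur below $\kappa_b-\epsilon$, so $\frac1T B_T(\kappa_b-\epsilon)\to F_b(\kappa_b-\epsilon)$ by the strong law for the bid arrival stream; since $B_T(\kappa_b)\ge B_T(\kappa_b-\epsilon)$ this yields $\liminf_T\frac1T B_T(\kappa_b)\ge F_b(\kappa_b-\epsilon)$. For the matching upper bound, $B_T(\kappa_b)$ can never exceed the total number of bids that have \emph{arrived} at prices $\le\kappa_b$, whose normalized count tends to $F_b(\kappa_b)$, so $\limsup_T\frac1T B_T(\kappa_b)\le F_b(\kappa_b)$. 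Letting $\epsilon\downarrow0$ and using continuity of $F_b$ at $\kappa_b$ squeezes both bounds to $F_b(\kappa_b)$; the statement for $A_T(\kappa_a)$ is identical.

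The key step for item~\ref{lm2} is a pairing identity. Writing $N^b_T$, $N^a_T$ for the total numbers of bids and asks that have arrived by time $T$, every order that leaves the book is consumed against an arriving order of the opposite type, so the number of bids still present equals the number that arrived minus the number consumed, and likewise for asks; subtracting the two relations gives
\[
B_T(\infty)-A_T(-\infty)=N^b_T-N^a_T.
\]
Because both arrival streams are Poisson of rate $1$, the strong law gives $\frac1T(N^b_T-N^a_T)\to0$, and hence $\frac1T B_T(\infty)-\frac1T A_T(-\infty)\to0$ almost surely. In particular the two $\liminf$'s in Corollary~\ref{coroll: liminf} coincide, re-deriving $F_b(\kappa_b)=1-F_a(\kappa_a)$.

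Finally I would pick $T_n\to\infty$ realizing $\liminf_T\frac1T B_T(\infty)=F_b(\kappa_b)$, which exists by the definition of $\liminf$ together with Corollary~\ref{coroll: liminf}. Along $T_n$ we have $\frac1{T_n}B_{T_n}(\infty)\to F_b(\kappa_b)$, and by the displayed identity $\frac1{T_n}A_{T_n}(-\infty)\to F_b(\kappa_b)=1-F_a(\kappa_a)$, establishing item~\ref{lm2}; item~\ref{lm1} holds along any sequence by the first step, hence in particular along $T_n$. The one point that looks delicate --- having to align two separate subsequences, one for $B_T(\infty)$ and one for $A_T(-\infty)$ --- disappears once the pairing identity shows their normalized difference vanishes, so I expect the only place that demands care is verifying the bookkeeping behind that identity (in particular that it persists verbatim across the binned variants, where an arriving bid and a same-bin ask annihilate one-for-one).
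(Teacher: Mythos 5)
Your proof is correct, but it takes a genuinely different route from the paper's. The paper does not invoke Corollary~\ref{coroll: liminf} at all; instead, for each $\epsilon>0$ it builds a sequence $T^\epsilon_n$ out of the infinitely-often event $\{\beta_t<\kappa_b+\epsilon\}$ (so that $B_{T^\epsilon_n}(\infty)=B_{T^\epsilon_n}(\kappa_b+\epsilon)$), derives four law-of-large-numbers inequalities controlling $B(\kappa_b)$, $B(\infty)$, $A(\kappa_a)$ and $A(-\infty)$ simultaneously to accuracy $O(\epsilon T^\epsilon_n)$ --- the ask-side bound coming from the same pairing fact you use, but in the approximate form that the bid and ask totals differ by the magnitude of a symmetric random walk, i.e.\ $O(\sqrt{T^\epsilon_n})$, combined with $F_b(\kappa_b)+F_a(\kappa_a)=1$ --- and then extracts a diagonal subsequence over $\epsilon_n\to 0$. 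You avoid the diagonal construction entirely: you first upgrade item~\ref{lm1} to a limit along the \emph{full} sequence $T\to\infty$ (a genuine strengthening of the stated lemma, proved by the same squeeze the paper uses inside Corollary~\ref{coroll: liminf}), then use the exact identity $B_T(\infty)-A_T(-\infty)=N^b_T-N^a_T$, and let a single liminf-realizing sequence for $B_T(\infty)$ do all the work. A side benefit of your route: the paper merely ``recalls'' $F_b(\kappa_b)=1-F_a(\kappa_a)$ at the relevant step, whereas your pairing identity together with Corollary~\ref{coroll: liminf} actually supplies a proof of that relation. One small difference in what is produced: the paper's times $T_n$ additionally satisfy $\beta_{T_n}<\kappa_b+\epsilon_n$ (at those times the book is literally almost empty above $\kappa_b$, not just sparse in normalized count), but the downstream application in Lemma~\ref{lm: pi eqns} uses only the normalized count statements, so nothing needed later is lost. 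Your closing caution is the right one to flag, and it checks out: in every variant (ordinary, ordinary binned, strict binned) a matching event consumes exactly one bid and one ask --- including the case where the triggering arrival never enters the book --- so the identity, and with it the transfer from $B_T(\infty)$ to $A_T(-\infty)$, holds verbatim.
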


\begin{proof}
Recall that for any $\epsilon > 0$ there exists a sequence of times $T^\epsilon_n \to \infty$ along which $\beta_t < \kappa_b + \epsilon$, and hence $B_{T^\epsilon_n}(\infty) = B_{T^\epsilon_n}(\kappa_b + \epsilon)$. Since each of $\beta_t < \kappa_b - \epsilon$ and $\alpha_t > \kappa_a + \epsilon$ occur only finitely many times, we may without loss of generality assume that these events do not occur after $T^\epsilon_1$. Consequently, after $T^\epsilon_1$, all bids arriving below $\kappa_b - \epsilon$ remain, and all asks arriving above $\kappa_b + \epsilon$ remain.

We now apply the law of large numbers to the bid and ask arrivals. Note that $B_t(x) - B_t(y)$ is always bounded above by the total number of bid arrivals between $x$ and $y$, and similarly for asks. We conclude that, by shifting indices on the sequence $T^\epsilon_n$, we can arrange the following:
\begin{enumerate}
\item $B_{T^\epsilon_n}(\infty) - B_{T^\epsilon_n}(\kappa_b) \leq (F_b(\kappa_b + \epsilon) - F_b(\kappa_b) + \epsilon) T^\epsilon_n$;
\item $(F_b(\kappa_b - \epsilon) - \epsilon)T^\epsilon_n \leq B_{T^\epsilon_n}(\kappa_b) \leq (F_b(\kappa_b) + \epsilon) T^\epsilon_n$;
\item $(1 - F_a(\kappa_a + \epsilon) - \epsilon)T^\epsilon_n \leq A_{T^\epsilon_n}(\kappa_a) \leq (1 - F_a(\kappa_a) + \epsilon) T^\epsilon_n$.
Moreover, the total numbers of bids and asks in the system differs by $o(T^\epsilon_n)$, and in fact, $O(\sqrt{T^\epsilon_n})$. Indeed, the difference in the number of arrivals of bids and asks is clearly the magnitude of a symmetric random walk; and they always depart in pairs. Recalling that $F_b(\kappa_b) + F_a(\kappa_a) = 1$, the above imply (decreasing $\epsilon$ if necessary) the fourth condition,
\item $A_{T^\epsilon_n}(-\infty) - A_{T^\epsilon_n}(\kappa_a) \leq (F_b(\kappa_b + \epsilon) - F_b(\kappa_b) + \epsilon) T^\epsilon_n$.
\end{enumerate}

We now pick a sequence of $\epsilon_n \to 0$, and take the diagonal subsequence of times:
\[
T_1 = T^{\epsilon_1}_1, ~~ T_{k+1} = \min_x \{T^{\epsilon_{k+1}}_x:~ T^{\epsilon_{k+1}}_x > T_k\}.
\]
\end{proof}

In what follows, we will be analyzing ordinary binned limit order books with finitely many bins. Define
\[
\pi^b_t(k) = \frac{1}{t} \int_0^{t} \one\{\beta_u \in \text{ bin $k$}\} du, \quad
\pi^a_t(k) = \frac{1}{t} \int_0^{t} \one\{\alpha_u \in \text{ bin $k$}\} du.
\]
Here, we say that if $\beta_t = -\infty$ then $\beta_t$ does not belong to any bin; similarly, if $\alpha_t = +\infty$ then $\alpha_t$ does not belong to any bin.

Let $\pi^b(\cdot)$ and $\pi^a(\cdot)$ be any limit point of $\pi^b_{T_n}(\cdot)$ and $\pi^a_{T_n}(\cdot)$, where $T_n$ is the sequence identified in Lemma~\ref{lm: Tn}. Our goal will be to analyze $\pi^b$ and $\pi^a$. We obtain the following characterization.

\begin{lem}\label{lm: pi eqns}
Let $k_b, k_a$ be the bins containing $\kappa_b$ and $\kappa_a$ respectively. Let $N$ be the total number of bins. Then $\pi^b$ and $\pi^a$ satisfy the following equations or inequalities:
\begin{subequations}\label{triv}
\begin{equation}\label{sum < 1}
\sum_{k=1}^N \pi^b(k) \leq 1; \quad \sum_{k=1}^N \pi^a(k) \leq 1.
\end{equation}
\begin{equation}
\pi^a(k) = \pi^b(k) = 0, ~~ k < k_b \text{ or } k > k_a.
\end{equation}
\end{subequations}
Equality holds in \eqref{sum < 1} if $\kappa_b > -\infty$ and $\kappa_a < +\infty$.

Let $a(k)$, $b(k)$ be the probability that an arriving order falls into bin $k$. That is, $a(k) = \int_{\text{bin $k$}} f_a(p) dp$, and similarly for bids. Then
\begin{subequations}\label{lln}
\begin{multline}\label{bid arrival-departure}
(1 - \sum_{l \leq k} \pi^a(l)) b(k) - \pi^b(k) (\sum_{l \leq k} a(l)) =\\
\begin{cases}
F_b(\kappa_b) - F_b(\inf\{x: x \in \text{ bin $k$}\}), & k = k_b\\
0, & k_b < k < k_a.
\end{cases}
\end{multline}
\begin{multline}\label{ask arrival-departure}
(1 - \sum_{l \geq k} \pi^b(l)) a(k) - \pi^a(k) (\sum_{l \geq k} b(l)) =\\
\begin{cases}
F_a(\sup\{x: x \in \text{ bin $k$}\}) - F_a(\kappa_a), & k = k_a\\
0, & k_b < k < k_a.
\end{cases}
\end{multline}
\end{subequations}
\end{lem}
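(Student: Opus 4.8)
The plan is to dispatch the three ``trivial'' facts directly from Theorem~\ref{thm:kappa} and the definition of $T_n$, and then to obtain the two balance relations \eqref{lln} from a flow-balance (rate-conservation) argument for the number of bids, respectively asks, in each bin, evaluated along the subsequence $T_n$. Throughout I fix a subsequence of $T_n$ along which both $\pi^b_{T_n}(\cdot)$ and $\pi^a_{T_n}(\cdot)$ converge; this exists since there are finitely many bins, and ``any limit point'' is exactly such a limit.

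For the facts in \eqref{triv}: at each $u$ the point $\beta_u$ lies in at most one bin, so $\sum_k \pi^b_t(k)\le 1$ for every $t$, whence \eqref{sum < 1} in the limit (and likewise for asks). By Theorem~\ref{thm:kappa}, $\beta_t<\kappa_b-\epsilon$ occurs only finitely often; choosing $\epsilon$ so that a bin $k<k_b$ lies in $(-\infty,\kappa_b-\epsilon)$, the event $\{\beta_u\in \text{bin }k\}$, and a fortiori $\{\alpha_u\in\text{bin }k\}$ (since $\alpha_u>\beta_u$), has vanishing time-density, giving $\pi^b(k)=\pi^a(k)=0$; the mirror argument using $\{\alpha_t>\kappa_a+\epsilon\}$ (and $\beta_t<\alpha_t$) handles $k>k_a$. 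Finally, if $\kappa_b>-\infty$ then $\beta_t=-\infty$ implies $\beta_t<\kappa_b-\epsilon$, which occurs only finitely often, so $\{\beta_t=-\infty\}$ has time-density $0$; since the finitely many bins partition $\BR$, $\sum_k\pi^b_t(k)$ is the density of $\{\beta_t\ne-\infty\}\to 1$, and symmetrically for asks, yielding equality in \eqref{sum < 1}.

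For \eqref{lln}, let $N^b_t(k)$ be the number of bids in bin $k$ at time $t$, from a finite initial state. A bid is \emph{added} to bin $k$ precisely when a bid arrives in bin $k$ and joins the book, which under the ordinary binning rule happens iff the lowest ask is not in any bin $\le k$ (including the case of no ask); a bid \emph{leaves} bin $k$ precisely when the highest bid is in bin $k$ and an arriving ask executes it, which happens iff that ask falls in a bin $\le k$. Bid arrivals in bin $k$ form a Poisson process of rate $b(k)$ and asks in bins $\le k$ one of rate $\sum_{l\le k}a(l)$; writing each count as its compensator plus a mean-zero martingale and applying the strong law for Poisson integrals gives
\[
\frac{1}{T}\big(N^b_T(k)-N^b_0(k)\big)=b(k)\Big(1-\sum_{l\le k}\pi^a_T(l)\Big)-\Big(\sum_{l\le k}a(l)\Big)\,\pi^b_T(k)+o(1).
\]
Along $T_n$ the right-hand side converges to the left-hand side of \eqref{bid arrival-departure}, so it remains to identify $\lim_n \frac1{T_n}N^b_{T_n}(k)$. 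For a middle bin $k_b<k<k_a$, the property $\beta_{T_n}<\kappa_b+\epsilon_n$ with $\epsilon_n\to0$ forces, for large $n$, the highest bid below bin $k$, so $N^b_{T_n}(k)=0$ and the limit is $0$. For $k=k_b$, once all bids lie in bin $k_b$ or below I have $N^b_{T_n}(k_b)=B_{T_n}(\infty)-B_{T_n}(\inf\{x:x\in\text{bin }k_b\})$; Lemma~\ref{lm: Tn} gives $T_n^{-1}B_{T_n}(\infty)\to F_b(\kappa_b)$, while the bids below bin $k_b$ never leave, so the law of large numbers gives $T_n^{-1}B_{T_n}(\inf\{x:x\in\text{bin }k_b\})\to F_b(\inf\{x:x\in\text{bin }k_b\})$, yielding the stated value. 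Equation \eqref{ask arrival-departure} follows by the mirror-image argument, completing the proof.

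I expect the main obstacle to be the rate-conservation step: matching the additions/departures counts to the compensators $b(k)T(1-\sum_{l\le k}\pi^a_T(l))$ and $(\sum_{l\le k}a(l))T\pi^b_T(k)$ with an $o(T)$ error, and arguing these hold along the very subsequence $T_n$ on which both $\pi^a_{T_n}$ and $\pi^b_{T_n}$ converge. The accompanying subtlety is the accumulation term at $k=k_b$, where one must invoke precisely the properties of $T_n$ from Lemma~\ref{lm: Tn} to separate the frozen bids below $\kappa_b$ (counted by $F_b$) from the transient ones.
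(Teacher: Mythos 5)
Your proposal is correct and follows essentially the same route as the paper: both read \eqref{lln} as the rate-conservation identity ``arrivals $-$ departures $=$ unfulfilled orders'' per bin, establish the law-of-large-numbers scaling for the arrival/departure counts by conditioning on the occupation times of $\{a_t>k\}$ and $\{b_t=k\}$ (the paper via binomial concentration, you via the equivalent Poisson compensator-plus-martingale decomposition), and identify the unfulfilled-order term at $k=k_b$, $k=k_a$ from Lemma~\ref{lm: Tn}. Your write-up is in fact more detailed than the paper's sketch (which dismisses \eqref{triv} as following from the definitions and does not spell out the accumulation term), and your flagged subtleties are genuine but handled exactly as you indicate.
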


\begin{proof}
Equations~\eqref{triv} follow from the definition of $\kappa_a$ and $\kappa_b$.

Equations~\eqref{lln} express the equation
\[
\text{order arrivals} - \text{order departures} = \text{unfulfilled orders}.
\]
The limiting number of unfulfilled orders in a given bin is given by Lemma~\ref{lm: Tn}. Let us show that the left-hand side represents the number of arrivals minus the number of departures.

Bids arrive into bin $k$ if the lowest ask is in some bin $a_t > k$, and then they arrive at rate $b(k)$. Formally, whenever the bin $a_t$ containing $\alpha_t$ satisfies $a_t > k$, the conditional probability that the next bid arrival will be into bin $k$ is $b(k)$; if $a_t \leq k$, the conditional probability is 0. Thus, conditional on the amount of time that $a_t > k$, the number of bid arrivals into bin $k$ is binomial with success probability $b(k)$. Since binomial random variables concentrate on their mean, we obtain the law of large numbers scaling above for the bid arrivals.

Similarly, bids depart from bin $k$ if $\beta_t$ is in bin $k$, and an ask arrives into some bin $l \leq k$. Therefore, conditional on the number of times that $\beta_t$ is in bin $k$, the number of bid departures is binomial with success probability $\sum_{l \leq k} a(l)$.
\end{proof}

The above is almost enough to determine the distributions $\pi^b$ and $\pi^a$, except for the inequality in \eqref{sum < 1}. Our next goal will be to find some sufficient conditions to conclude $\kappa_b > -\infty$ and $\kappa_a < \infty$.

\begin{lem}\label{lm:finite kappa}
Suppose that the binning and arrival price distributions for the limit order book are such that there exist prices $x < y$ with the following properties:
\begin{enumerate}
\item $0 < F_b(x) < F_b(y) < 1$, $0 < F_a(x) < F_b(y) < 1$;
\item The bin partition refines the partition $(-\infty,x] \cup (x,y] \cup (y,\infty)$;
\item $F_b(x) = 1-F_a(y)$ and $F_b(y) = 1-F_a(x)$.
\end{enumerate}
Then $\kappa_b$ and $\kappa_a$ satisfy
\[
1-F_a(\kappa_a) = F_b(\kappa_b) \geq \frac{2X(1-X)-(1-Y)}{(1-X)+(Y-X)},
\]
where $X = F_b(x)$ and $Y = F_b(y)$.

In particular, if we may choose $Y=1-X$, then $F_b(\kappa_b) > 0$ and $F_a(\kappa_a) < \infty$.
\end{lem}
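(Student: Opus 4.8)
The plan is to show that the common value $c := F_b(\kappa_b) = 1-F_a(\kappa_a)$ (the two are equal because departures occur in bid--ask pairs, as recorded in the proof of Lemma~\ref{lm: Tn}) satisfies $c \geq D/E$, where I abbreviate $D = 2X(1-X)-(1-Y)$ and $E = (1-X)+(Y-X) = 1-2X+Y$. Since $c = \liminf_{T}\tfrac1T B_T(\infty)$ by Corollary~\ref{coroll: liminf}, and since by hypothesis (2) the binning refines the three-bin partition $I_1=(-\infty,x]$, $I_2=(x,y]$, $I_3=(y,\infty)$, I would first reduce to the coarse three-bin \emph{ordinary} model: by Lemma~\ref{lm: bin monotonicity} passing to a finer ordinary partition only increases each $B_t(p)$ and hence increases $c$, while by Corollary~\ref{coroll: ordinary and strict} a strict book dominates the corresponding ordinary one. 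In either case the value of $c$ for the given book is at least the value $c_{\mathrm{coarse}}$ of the three-bin ordinary book, so it suffices to prove that $c_{\mathrm{coarse}} = D/E$.

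For the three-bin model I would exploit the reflection symmetry encoded in hypothesis (3). Writing $b(k),a(k)$ for the probabilities that an arrival lands in bin $k$, conditions (3) give $b(1)=a(3)=X$, $b(2)=a(2)=Y-X$, and $b(3)=a(1)=1-Y$, so the dynamics are invariant under reflecting prices and interchanging bids with asks. Consequently the occupation fractions of Lemma~\ref{lm: pi eqns} may be written $\pi^b(1)=\pi^a(3)=:u$, $\pi^b(2)=\pi^a(2)=:v$, $\pi^b(3)=\pi^a(1)=:w$, and the lowest- and middle-bin arrival--departure balances of Lemma~\ref{lm: pi eqns} collapse to the two relations $(1-w)X - u(1-Y) = c$ and $(1-v-w)(Y-X) = v(1-X)$. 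The observation that closes this (otherwise underdetermined) system is that $w=0$: a bid arriving in the top bin $I_3$ has price $>y$, so the lowest ask, being higher still, lies in the \emph{same} bin $I_3$, and under the ordinary matching rule the pair clears immediately. Because $\kappa_a$ lies in $I_3$, asks accumulate there, so eventually there is always an ask available in $I_3$ and no bid can ever remain in it; hence $\pi^b(3)=w=0$, and symmetrically $\pi^a(1)=0$. Feeding $w=0$ and the normalization $u+v+w=1$ into the two balances gives $v=(Y-X)/E$, $u=(1-X)/E$, and finally $c = X - u(1-Y) = \frac{XE-(1-X)(1-Y)}{E} = \frac{D}{E}$, using the identity $XE-(1-X)(1-Y)=D$. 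The bound $c \geq D/E$ for the original book then follows from the monotone reduction, and the special case is the substitution $Y=1-X$, giving $D/E = X(1-2X)/(2-3X)$, which is strictly positive exactly when $0<X<\tfrac12$ (as is forced by $x<y$ here); this yields $F_b(\kappa_b)>0$ and hence $\kappa_a<\infty$.

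The main obstacle is legitimizing the normalization $u+v+w=1$, i.e.\ ruling out the transient regime $\kappa_b=-\infty$ in which the bid book empties with positive asymptotic frequency and $\sum_k\pi^b(k)<1$; note also that the claims ``$\kappa_a\in I_3$'' and ``asks accumulate in $I_3$'' used to justify $w=0$ presuppose non-transience, so a short bootstrap is needed. This is precisely where the hypothesis $2X(1-X)>1-Y$ (that is, $D>0$) enters: in the hypothetical transient regime the source term $F_b(\kappa_b)-F_b(\inf I_1)$ in the lowest-bin balance vanishes, forcing $u=X/(1-Y)$, and combined with $v=(Y-X)/(1-X)\,u$ this gives $u+v = \tfrac{XE}{(1-Y)(1-X)}$, which exceeds $1$ precisely because $XE-(1-X)(1-Y)=D>0$ --- contradicting $\sum_k\pi^b(k)\le 1$. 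Turning this occupation-fraction contradiction into a clean proof of positive recurrence of the finite-state count chain (most naturally via a Foster--Lyapunov drift estimate on the per-bin order counts, with $D>0$ ensuring the drift points inward, or via the identity $B_T(\infty)=\tfrac12 T - M_T + o(T)$ relating $c$ to the market-order rate $M_T$) is the technical heart of the argument; the remaining steps are the elementary algebra above together with the monotonicity reductions already established in Section~\ref{section:coupling}.
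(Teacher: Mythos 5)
Your opening reduction --- passing to the coarse three-bin ordinary book via Lemma~\ref{lm: bin monotonicity} and then working with the arrival--departure balances of Lemma~\ref{lm: pi eqns} --- is exactly the paper's starting point, and your algebra under your extra assumptions is internally consistent (indeed $XE-(1-X)(1-Y)=D$ checks out). But two of those extra assumptions are genuine gaps. First, the symmetrization $\pi^b(1)=\pi^a(3)$, $\pi^b(2)=\pi^a(2)$, $\pi^b(3)=\pi^a(1)$ is unjustified: $\pi^b,\pi^a$ are subsequential limits of empirical occupation fractions along the random time sequence of Lemma~\ref{lm: Tn}, and the reflection symmetry of the dynamics only gives distributional invariance of the \emph{pair}, not realizationwise equality of a particular limit point with its reflected partner. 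The paper circumvents this precisely by adding the symmetric pairs of equations (bid balance for bin $1$ plus ask balance for bin $3$, etc.), so that only the sums $\pi^a(1)+\pi^b(3)$, $\pi^a(2)+\pi^b(2)$, $\pi^a(3)+\pi^b(1)$ ever appear as unknowns --- the system closes on these sums with no symmetry hypothesis on the solution.

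The more serious gap is the circularity around transience, which you partly flag yourself. The entire content of the lemma is to rule out $\kappa_b=-\infty$ and $\kappa_a=+\infty$; yet your computation of $c_{\mathrm{coarse}}$ uses both $w=\pi^b(3)=0$ (justified by ``asks accumulate in $I_3$,'' i.e.\ $\kappa_a<\infty$) and the normalization $u+v+w=1$ (which Lemma~\ref{lm: pi eqns} guarantees only when $\kappa_b>-\infty$ and $\kappa_a<+\infty$). Your proposed bootstrap does not repair this: the transient-regime contradiction you sketch again sets $w=0$, which is exactly what fails there --- redoing your computation with $w$ free, the balances give $u=(1-w)X/(1-Y)$ and $v=(1-w)(Y-X)/E$, and $u+v+w\le 1$ with $D>0$ forces only $w=1$, i.e.\ the highest bid sitting in $I_3$ with an empty ask side almost all the time, which is not yet a contradiction without further argument; hence your appeal to an unproven Foster--Lyapunov estimate as ``the technical heart.'' The paper needs none of this machinery and never distinguishes regimes: it assembles the paired balances into the $3\times 3$ system displayed in its proof, left-multiplies by the row vector $\left(1-2X+Y,\ 1-Y,\ (1-Y)-2X(1-X)\right)$, and invokes only the unconditional one-sided bound $\sum_k\left(\pi^a(k)+\pi^b(k)\right)\le 2$ from \eqref{sum < 1}, obtaining $F_b(\kappa_b)\ge D/E$ in all regimes at once; when $D>0$ (e.g.\ $Y=1-X$, $X<1/2$) this simultaneously yields finiteness of $\kappa_b$ and $\kappa_a$. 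Note also that for this reason the paper claims only the inequality, not your exact identity $c_{\mathrm{coarse}}=D/E$, which would require validating all the equality assumptions above.
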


\begin{proof}
We apply Lemma~\ref{lm: pi eqns} to the 3-bin partition appearing in the statement. By Lemma~\ref{lm: bin monotonicity}, this provides a lower bound on $\kappa_b$, and an upper bound on $\kappa_a$, for the original problem. The final condition implies that in the 3-bin partition, the probability of a bid arrival in bin $k$ is equal to $X$, $Y-X$, and $1-Y$ for the three bins, and the probability of ask arrival in bin $k$ is equal to $1-Y$, $Y-X$, and $X$. 

There will be six equations in \eqref{lln}. Note that in this scenario, $k_b = 4-k_a$, and they cannot both be equal to 2 since $\alpha_t$ and $\beta_t$ cannot be in the same bin.

Let us add together the following three pairs of equations: \eqref{bid arrival-departure} for bin 1 and \eqref{ask arrival-departure} for bin 3, \eqref{bid arrival-departure} for bin 2 and \eqref{ask arrival-departure} for bin 2, and finally \eqref{bid arrival-departure} for bin 3 and \eqref{ask arrival-departure} for bin 1. We obtain
\[
\begin{pmatrix}
X & 0 & 1-Y\\
Y-X & Y-X + 1-X & 0\\
1-Y+1 & 1-Y & 1-Y
\end{pmatrix}
\begin{pmatrix}
\pi^a(1) + \pi^b(3)\\
\pi^a(2) + \pi^b(2)\\
\pi^a(3) + \pi^b(1)
\end{pmatrix}
=2
\begin{pmatrix}
X - F_b(\kappa_b)\\
(Y-X)\\
(1-Y)
\end{pmatrix}.
\]
Premultiplying by $(1-X+Y-X, 1-Y, (1-Y)-2X(1-X))$ and observing that $\sum \pi^a(k) + \pi^b(k) \leq 2$, we obtain
\[
F_b(\kappa_b) \geq \frac{2X(1-X)-(1-Y)}{(1-X)+(Y-X)},
\]
as required. The final assertion follows because for $Y=1-X$ with $X < 1/2 < Y$, the numerator is strictly positive. (Note the denominator is always positive.)
\end{proof}

We now have obtained sufficient conditions for Lemma~\ref{lm: pi eqns} to provide us with the full description of the (discrete) distributions $\pi^a$ and $\pi^b$, \emph{if we knew the value of $\kappa_b$}.

\section{Proof of Theorem~\ref{thm:main}}\label{section:proof of main}
Let us summarize what we know so far. We are interested in finding the value of $\kappa_b$ in an ordinary (unbinned) limit order book. We know (Lemma~\ref{lm: strict and ordinary}) that it can be obtained by considering limit order books with smaller and smaller bins. Whenever we have a finite number of bins, Lemma~\ref{lm: pi eqns} tells us how to find $\pi^b$ and $\pi^a$, a pair of distributions supported on the bins between $\kappa_b$ and $\kappa_a$. We expect $\pi^b$ and $\pi^a$ to be the steady-state distribution of the (bin containing the) rightmost bid and of the (bin containing the) leftmost ask respectively, although we have only shown that it is the limiting distribution along a special sequence of times.

We now observe the following. Suppose we knew the values of $\kappa_b$ and $\kappa_a$. Then Lemma~\ref{lm: pi eqns} would allow us to compute $\pi^b$ and $\pi^a$, and we could rediscover $\kappa_b$ and $\kappa_a$ as the boundaries of their support. We will now see that the requirements that (i) Lemma~\ref{lm: pi eqns} should hold and (ii) $(\kappa_b, \kappa_a)$ should be the support of the resulting distributions are enough to determine $\kappa_b$ and $\kappa_a$.

\begin{proof}[Proof of Theorem~\ref{thm:main}]
Reparametrize coordinates so that all arrivals happen on $[0,1]$.

We consider a sequence of ordinary binned limit order books, $\CL^N$, which differ in the binning partitions $\Pi^N$ that they use. We require that $\Pi^N$ refines $\Pi^{N-1}$, and that each bin of $\Pi^N$ has width $\leq \frac1N$, and also $a(k), b(k) \leq \frac1N$. Here, $a(k)$ is the probability that an arriving ask enters bin $k$, and similarly for $b(k)$ and bids.

Suppose that in such a limit order book we do not know $\kappa_b$ and $\kappa_a$, but do know the bins $k_b = k_b(N)$ and $k_a = k_a(N)$ into which they fall. Then \eqref{lln} gives equations for $\pi^a(k)$ and $\pi^b(k)$ for all $k \neq k_a, k_b$, plus the following inequalities:
\begin{enumerate}
\item\label{1st ineq} $0 \leq \pi^b(k) \leq b(k)$, $0 \leq \pi^a(k) \leq a(k)$ for any $k$;
\item\label{2nd ineq} $0 \leq \pi^b(k_a) \leq \frac{\pi^a(k_a)}{F_a(\kappa_a)} b(k_a)$, $0 \leq \pi^a(k_b) \leq \frac{\pi^b(k_b)}{1-F_b(\kappa_b)}a(k_b)$.
\end{enumerate}
Now consider taking $N \to \infty$. By \eqref{1st ineq}, $\pi^b(k_b)$ and $\pi^a(k_a) \to 0$. Then by \eqref{2nd ineq},
\begin{equation}\label{eqn:right limit}
\frac{\pi^b(k_a)}{b(k_a)} \to 0, ~~ \frac{\pi^a(k_b)}{a(k_b)} \to 0.
\end{equation}
Note that these are the Radon-Nikodym derivatives of $\pi^b$ with respect to the bid arrival distribution $f_b$ (discretized to bins), and of $\pi^a$ with respect to $f_a$.

Consider now $\pi^b(k_b + 1)$. For it we have
\[
\pi^b(k_b+1) \left(\sum_{k \leq k_b+1} a(k)\right) = \left(\sum_{k > k_b+1} \pi^a(k)\right) b(k_b+1).
\]
By considerations similar to \eqref{1st ineq}, $\pi^a(k) \to 0$ uniformly as $N \to \infty$ for any single bin $k$. Consequently, as $N \to \infty$, $\sum_{k > k_b(N)+1} \pi^a(k) = 1 - \pi^a(k_b(N)) - \pi^a(k_b(N)+1) \to 1$, and we obtain
\begin{equation}\label{eqn:left limit 1}
\frac{\pi^b(k_b(N)+1)}{b(k_b(N)+1)} \to \left(\sum_{k \leq k_b+1} a(k)\right)^{-1}
\end{equation}
and similarly for asks.

An identical calculation yields $\frac{\pi^b(k_b(N)+2)}{b(k_b(N)+2)} \to \left(\sum_{k \leq k_b+2} a(k)\right)^{-1}$, from which
\begin{equation}\label{eqn:left limit 2}
\frac{\pi^b(k_b(N)+1)}{b(k_b(N)+1)} - \frac{\pi^b(k_b(N)+2)}{b(k_b(N)+2)} \to \frac{a(k_b(N)+2)}{\left(\sum_{k \leq k_b(N)+1} a(k)\right)\left(\sum_{k \leq k_b(N)+2} a(k)\right)},
\end{equation}
and similarly for asks. Note that we could have the summation running to $k_b(N)$ in the denominator, since $a(k) \to 0$ uniformly for any single bin $k$.

As $N \to \infty$, the derivatives $\frac{\pi^b(k)}{b(k)}, k=1,\dotsc,N$ and $\frac{\pi^a(k)}{a(k)}, k=1,\dotsc,N$ are bounded between 0 and $\frac{1}{F_a(\kappa_b)}$, resp. $\frac{1}{F_b(\kappa_a)}$, and hence converge along some subsequence. Consider any such pair of subsequential limits (we will shortly see that it is unique), $\varpi^b$ and $\varpi^a$. We find
\begin{subequations}
\[
0 \leq \varpi^b(x), \varpi^b(x);
\]
\[
\varpi^b(x) = \varpi^a(x) = 0, ~~ x < \kappa_b \text{ or } x > \kappa_a;
\]
\[
\int_{\kappa_b}^{\kappa_a} \varpi^b(x) f_b(x) dx = \int_{\kappa_b}^{\kappa_a} \varpi^a(x) f_a(x) dx = 1;
\]
\begin{align*}
&\varpi^b(x) F_a(x) = \left(\int_x^{\kappa_a} \varpi^a(y) f_a(y) dy\right), ~~ \kappa_b < x < \kappa_a\\
&\varpi^a(x) (1-F_b(x)) = \left(\int_{\kappa_b}^x \varpi^b(y) f_b(y) dy\right), ~~ \kappa_b < x < \kappa_a;
\end{align*}
\begin{align*}
\varpi^b(\kappa_b) = \frac{1}{F_a(\kappa_b)},\qquad &\frac{d}{dx}\varpi^b(x)\vert_{x=\kappa_b} = -\frac{f_a(\kappa_b)}{F_a(\kappa_b)^2},\\
\varpi^a(\kappa_a) = \frac{1}{1-F_b(\kappa_a)},\qquad &\frac{d}{dx}\varpi^a(x)\vert_{x=\kappa_a} = \frac{f_b(\kappa_a)}{(1-F_b(\kappa_a))^2};
\end{align*}
\[
\varpi^b(x) \to 0, ~~ x \uparrow \kappa_a; \qquad \varpi^a(x) \to 0, ~~ x \downarrow \kappa_b.
\]
\end{subequations}

It remains to observe that the pair of integral equations can be converted into a pair of differential equations for $\varpi^b(x)$. Indeed,
\[
\left(F_a(x)\varpi^b(x)\right)' = -\varpi^a(x) f_a(x) = -\frac{f_a(x)}{1-F_b(x)} \int_{\kappa_b}^x \varpi^b(y) f_b(y) dy,
\]
and hence
\[
\left(-\frac{f_a(x)}{1-F_b(x)} \left(F_a(x) \varpi^b(x) \right)' \right)' = \varpi^b(x) f_b(x).
\]

For this second-order ODE, we have two initial conditions -- the values $\varpi^b(\kappa_b)$ and $\frac{d}{dx} \varpi^b(x)\vert_{x=\kappa_b}$. This allows us to find a solution for the ODE given $\kappa_b$, for each value of $\kappa_b$. Recall, however, that we have an additional constraint $\varpi^b(x) \to 0$ as $x \to \kappa_a$, where $\kappa_a$ and $\kappa_b$ are related via $F_b(\kappa_b) = 1-F_a(\kappa_a)$. It is not difficult to see that there can only be one value of $\kappa_b$ that is consistent with the ODE and the additional constraint of $\varpi$ vanishing at $\kappa_a = F_a^{-1}(1-F_b(\kappa_b))$.
\end{proof}

\subsection{Calculations for uniform distribution}\label{section:pictures}
For the case of the uniform distribution, $f_a = f_b = \one_{[0,1]}$, we can take the calculations a step further by solving the above differential equation. We obtain
\[
\left(-\frac{1}{1-x}\left(x \varpi^b(x)\right)'\right)' = \varpi^b(x), ~~ \varpi^b(\kappa_b) = \frac{1}{\kappa_b}, ~~ \frac{d}{dx}\varpi^b(x)\vert_{x=\kappa_b} = -\frac{1}{\kappa_b^2},
\]
which can be solved explicitly to give
\[
\varpi^b(x) = (1-\kappa) \left(\frac1x + \log\left(\frac{1-x}{x}\right) \right), ~~ x \in (\kappa,1-\kappa).
\]
The value of $\kappa=\kappa_b$ is given as follows. Let $w$ be the unique solution of $w e^w = e^{-1}$; then $\kappa = \frac{w}{w+1} \approx 0.217$.

Figure~\ref{fig:empirical and exact} compares the empirical distribution of the location of $\beta_t$ when we consider 100 bins, and the curve $\varpi^b$ given above. The close agreement between the two curves supports Conjecture~\ref{conj:recurrent}. Further support is given by Figure~\ref{fig:number of bids}, which shows the total number of bids in bins to the right of the threshold as a function of time. The plot of the maximal value observed up to time $t$ as a function of $t$ is seen not to grow linearly, supporting the conjecture.

\begin{figure}[tbhp]
\begin{center}
\includegraphics[width=2in]{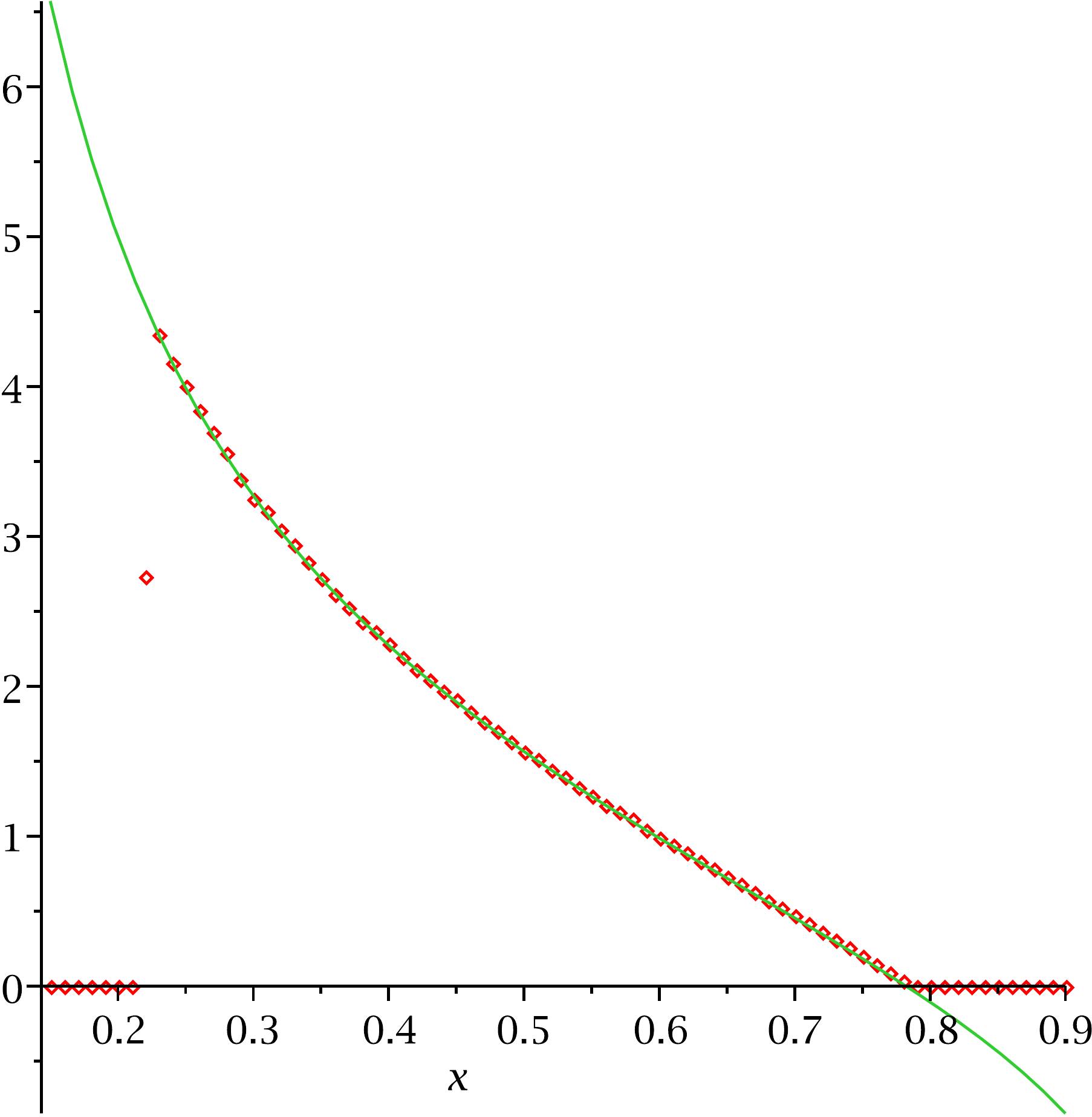}
\caption{Empirical distribution of the location of $\beta_t$ with uniform bid and ask arrivals and 100 bins, and the predicted density for the unbinned model.}
\label{fig:empirical and exact}
\end{center}
\end{figure}

\begin{figure}[tbhp]
\begin{center}
\includegraphics[width=2in]{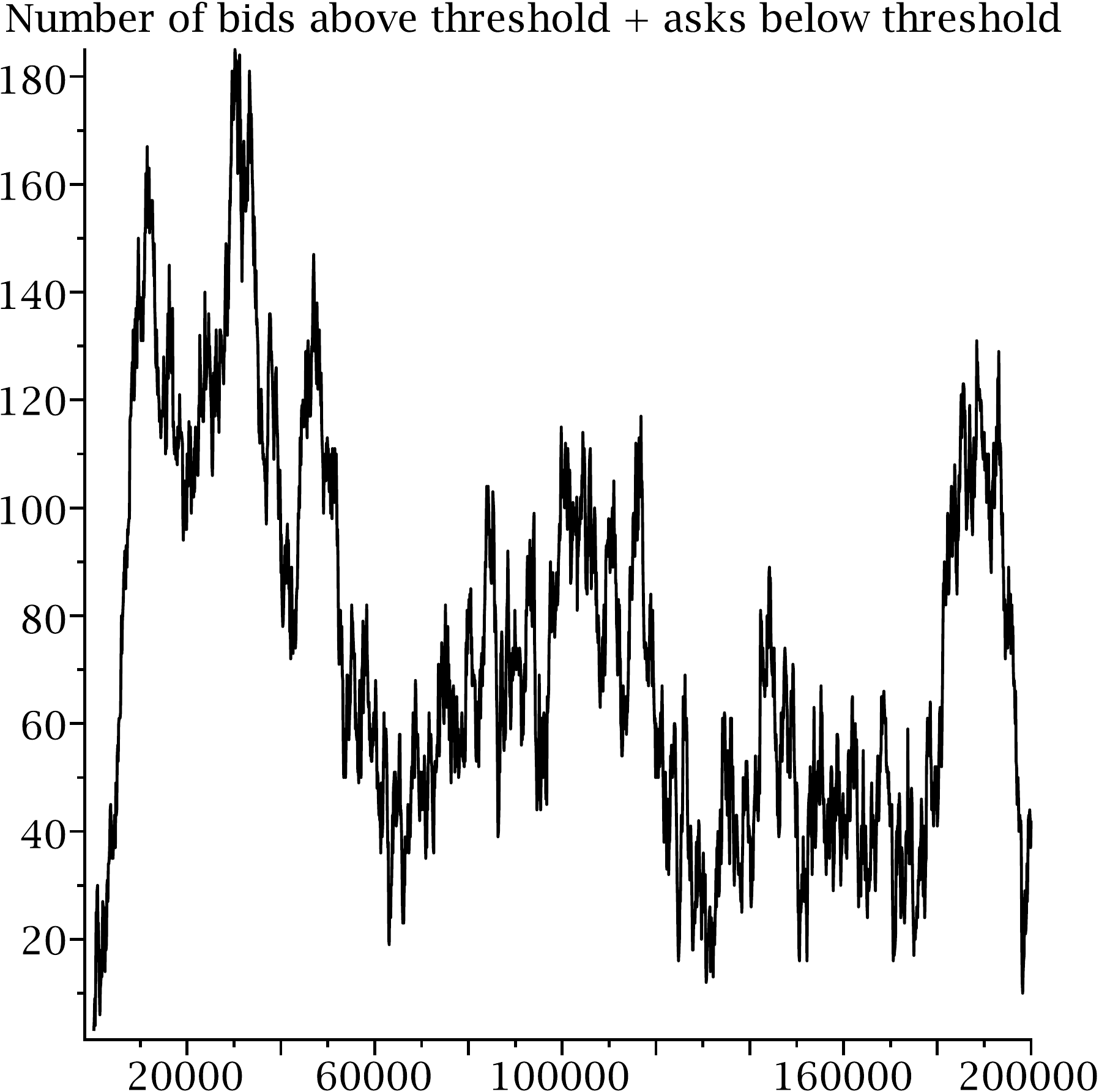} ~
\includegraphics[width=2in]{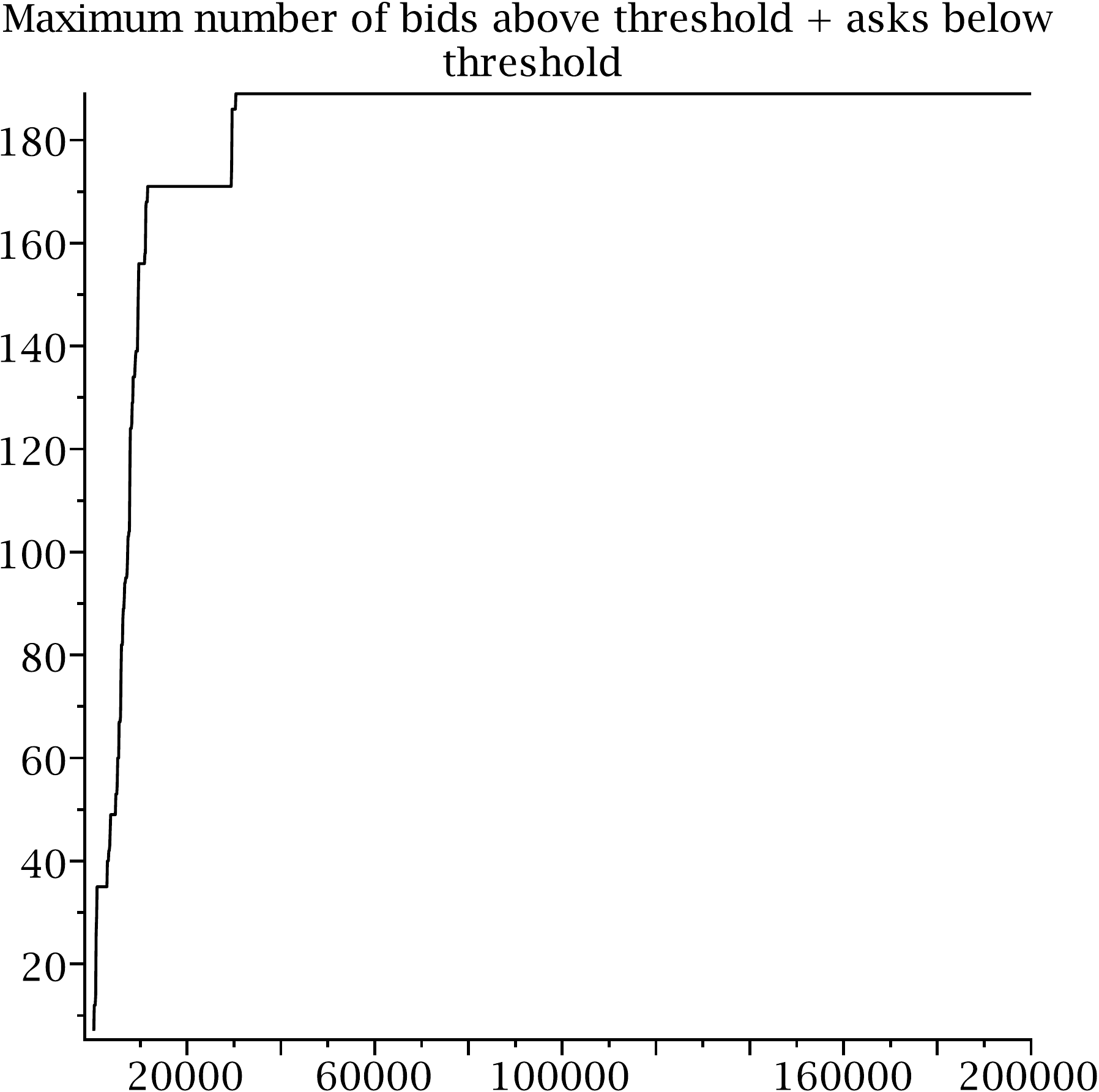}
\caption{Left: total number of bids above bin $k_b=21$ and asks below bin $k_a=79$, as a function of time. Right: running maximum of this quantity, as a function of time. The last jump of the running maximum occurs at arrival 30,309 of 200,000.}
\label{fig:number of bids}
\end{center}
\end{figure}

\section{Recurrence}\label{section:recurrence}
In this section, our goal is to prove results similar to Conjecture~\ref{conj:recurrent}. While we will not be able to derive recurrence when there is an infinite supply of bids and asks at $\kappa_b$ and $\kappa_a$, we will be able to derive it for smaller subintervals.

\begin{thm}
Let the arrivals of both bids and asks are Poisson of rate 1 in time, with densities $f_b$ and $f_a$ respectively in price. Suppose that there exist values $x$ and $y$ such that $F_b(y)<F_b(x)+F_a(x)$ and $F_a(y) < F_a(x) + (1-F_b(y))$. (For example, we may have $F_a(x) = F_b(x) > 1/3$ and $F_a(y) = F_b(y) < 2/3$.) Let $\CL$ be a limit order book whose initial state is such that there are infinitely many bid orders at $x$ and infinitely many ask orders at $y$. Letting the state of $\CL$ be described by the bids and asks in $(x, y)$, it is a positive Harris recurrent Markov chain.
\end{thm}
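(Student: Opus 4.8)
The plan is to realize the window dynamics as a Markov process with an accessible atom and to verify a Foster--Lyapunov drift condition toward it, then invoke the standard positive Harris criterion (Meyn--Tweedie). Since there are infinitely many bids at $x$ and infinitely many asks at $y$, we always have $\beta_t \ge x$ and $\alpha_t \le y$; asks arriving below $x$ and bids arriving above $y$ are matched against this infinite liquidity and leave without altering the orders in the open interval $(x,y)$. Hence the relevant state is the finite point configuration of bids and asks in $(x,y)$, a Polish space on which the process is a pure-jump Markov process driven by the rate-$2$ arrival stream; I would also use the discrete-time chain $\Phi_n$ embedded at arrival epochs. Write $B_t$ and $A_t$ for the numbers of bids and asks in $(x,y)$. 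The \emph{empty} configuration $e$ (no orders in $(x,y)$, so $\beta_t=x$, $\alpha_t=y$) is a single point, hence a genuine atom; establishing that $e$ is accessible and positive recurrent will give the result.

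First I would record the one-step drifts of the counts, which is where the two hypotheses enter. When $B_t\ge 1$ the count $B_t$ increases at rate $F_b(\alpha_t)-F_b(x)$ (a bid lands in $(x,\alpha_t)$ and joins the book) and decreases at rate $F_a(\beta_t)$ (an ask undercuts the current best bid $\beta_t>x$ and removes it); using $\alpha_t\le y$ and $\beta_t>x$ this gives the \emph{configuration-independent} bound
\[
\mathcal A B_t \;=\; \bigl(F_b(\alpha_t)-F_b(x)\bigr)-F_a(\beta_t)\;\le\;\bigl(F_b(y)-F_b(x)\bigr)-F_a(x)\;=:\;-d_b\;<\;0,
\]
where $d_b>0$ is exactly the first hypothesis $F_b(y)<F_b(x)+F_a(x)$. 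Symmetrically, the second hypothesis $F_a(y)<F_a(x)+(1-F_b(y))$ yields $\mathcal A A_t\le -d_a<0$ whenever $A_t\ge 1$. The subtlety is that these individual negative drifts do \emph{not} make $V=B_t+A_t$ a Lyapunov function: on the axis $A_t=0$ with the best bid $\beta_t$ near $x$, asks enter the window roughly as fast as bids are cleared, so the drift of $B_t+A_t$ can be nonnegative with the count arbitrarily large. I would therefore take the quadratic $V(\Phi)=B_t^2+A_t^2$, for which the generator gives $\mathcal A(B_t^2)=2B_t\,\mathcal A B_t+(\text{total rate of unit jumps of }B_t)\le -2d_b B_t+2$ for every state, and likewise for $A_t$, so that
\[
\mathcal A V \;\le\; -2d_b B_t-2d_a A_t+4 \;\le\; -1 \qquad\text{once } B_t+A_t > K
\]
for a suitable constant $K$. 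The growing prefactor $2B_t$ is what lets the strong negative drift of the large component dominate the bounded cross-perturbation coming from the other axis.

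It remains to treat the bounded-count set $C=\{B_t+A_t\le K\}$ and the atom. Accessibility of $e$ is explicit: from any configuration, a run of $B_t$ asks arriving below $x$ removes the window bids from the top down (each such ask undercuts and clears the current $\beta_t>x$, and being marketable it adds nothing), after which a run of $A_t$ bids arriving above $y$ clears the window asks; each such arrival has probability at least $\tfrac12 F_a(x)>0$, resp.\ $\tfrac12(1-F_b(y))>0$, so $e$ is reached in $B_t+A_t$ steps with positive probability. This gives $\psi$-irreducibility (with $\psi$ charging $e$) and aperiodicity, and it also shows that from every state in $C$ the atom is hit within at most $K$ steps with probability bounded below; composing with one further step out of $e$ (a bid or ask enters with continuous price density) produces a minorization $P^{K+1}(\phi,\cdot)\ge \epsilon_0\,\nu(\cdot)$ uniform over $\phi\in C$, so $C$ is petite (indeed small). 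Combining $\mathcal A V\le -1$ off $C$ with petiteness of $C$ yields, by the positive Harris criterion, that $\Phi$ (and hence the continuous-time process) is positive Harris recurrent; equivalently, the drift gives $\BE_\phi[\tau_C]<\infty$ from every $\phi$ and the emptying moves give $\BE_e[\tau_e]<\infty$, so the atom $e$ is positive recurrent.

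The main obstacle I expect is not the drift computation --- once one commits to the quadratic $V$, it is a short calculation controlled entirely by the two hypotheses --- but the general-state-space bookkeeping: verifying that the bounded-count set is genuinely petite and that the process is $\psi$-irreducible and aperiodic on the continuous configuration space, and checking that the count drifts are bounded \emph{uniformly} over all configurations, so that the bound on $\mathcal A V$ holds pointwise rather than in some averaged sense. A secondary point to get right is the reduction itself: that the infinite walls at $x$ and $y$ make the point configuration (together with $B_t,A_t$) a time-homogeneous Markov process, and that the matching rule indeed never adds orders during the emptying runs.
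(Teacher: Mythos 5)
Your proposal is correct, but it takes a genuinely different and considerably more heavyweight route than the paper. The paper's proof is three sentences: it stochastically dominates each count separately by a subcritical queue --- bids enter the window $(x,y)$ at rate at most $F_b(y)-F_b(x)$, while whenever the window contains a bid the best bid $\beta_t>x$ is removed at rate at least $F_a(x)$ (any ask arriving below $x$ undercuts it), so the bid count is stochastically bounded by a geometric random variable with parameter $\frac{F_b(y)-F_b(x)}{F_a(x)}<1$, and symmetrically the ask count by a geometric with parameter $\frac{F_a(y)-F_a(x)}{1-F_b(y)}<1$ --- and then asserts that ``this suffices to prove the claim.'' Your Foster--Lyapunov argument arrives at exactly the same two inequalities (your $d_b,d_a>0$ are precisely the paper's subcriticality conditions), but then does real extra work that the paper leaves implicit: your observation that the linear function $B+A$ fails near the axes is correct and non-obvious; the quadratic fix is sound, since jumps are unit-size and the total jump rate is at most $2$, giving $\mathcal{A}(B^2)\le -2d_bB+2$ in every state; and your verification that the empty window is a genuine accessible atom and that $\{B+A\le K\}$ is petite supplies the regeneration structure on the uncountable configuration space that the paper's closing assertion glosses over. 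In that sense your write-up is the more rigorous of the two, while the paper's comparison argument buys brevity and an explicit quantitative conclusion (geometric tails for the queue lengths) that your drift bound yields only implicitly. One expositional slip should be fixed: your opening claim that asks arriving below $x$ ``leave without altering the orders in the open interval $(x,y)$'' is false whenever the window contains a bid --- such an ask executes against the window's best bid $\beta_t>x$, exactly as your own drift computation and emptying argument correctly use later. The orders that are genuinely irrelevant to the Markov reduction are instead the bids that join the book below $x$ and the asks that join above $y$: since the walls pin $\beta_t\ge x$ and $\alpha_t\le y$, those orders are never executed and may be discarded from the state.
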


\begin{proof}
Consider the bids in $(x,y)$. They arrive at rate at most $b(3) = F_b(y)-F_b(x)$. Moreover, whenever there are any bids in bin 3, they depart at rate at least $F_a(x)$. Consequently, the number of bids in bin 3 is (stochastically) bounded above by a geometric random variable with parameter $\frac{F_b(y)-F_b(x)}{F_a(x)} < 1$. Similarly, the number of asks in bin 3 is bounded above by a geometric random variable with parameter $\frac{F_a(y)-F_a(x)}{1-F_b(y)} < 1$. This suffices to prove the claim.
\end{proof}
Note that in particular this result gives an upper bound on the value of $\kappa_b$ (and a lower bound on $\kappa_a$).

We can prove a slightly stronger result for binned limit order books.
\begin{thm}
Suppose arriving orders are equally likely to be bids and asks, and $f_a = f_b = \one_{[0,1]}$. Let $\epsilon > 0$, and consider the binned limit order book with 5 bins of sizes $1/5+\epsilon$, $1/5-\epsilon$, $1/5$, $1/5-\epsilon$, $1/5+\epsilon$ whose initial state has infinitely many bids in bin 1 and infinitely many asks in bin 5. This limit order book (considered on the middle three bins only) is positive recurrent.
\end{thm}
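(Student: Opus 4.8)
The plan is to treat the three middle bins as an irreducible continuous-time Markov chain on a countable state space and to prove positive recurrence by a Foster--Lyapunov drift argument, in the spirit of the preceding theorem but genuinely more delicate. The first step is to pin down the state structure. Because the matching rule forces every resting bid to sit below every resting ask, and because a bid and an ask can never rest simultaneously in one bin (an arrival into an occupied opposite-type bin matches and departs at once), each of bins $2,3,4$ holds only bids, only asks, or nothing. The state is the vector of occupancies of these bins, which is countable, and all transition rates depend on it only through the pair $(\beta,\alpha)$, where $\beta\in\{1,2,3,4\}$ is the highest occupied bid bin and $\alpha\in\{2,3,4,5\}$ the lowest occupied ask bin (bins $1$ and $5$ being the infinite reservoirs, so $1\le\beta<\alpha\le 5$). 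Since $f_a=f_b=\one_{[0,1]}$, an arriving bid or ask lands in bin $k$ at rate equal to the width $w_k$, with $w_1=w_5=\tfrac15+\epsilon$, $w_2=w_4=\tfrac15-\epsilon$, $w_3=\tfrac15$. I would record the reflection symmetry $k\mapsto 6-k$ combined with bid$\leftrightarrow$ask (which halves the case analysis) and verify irreducibility together with reachability of the empty interface $(1,5)$: a long run of low ask arrivals clears the middle bids, after which a run of high bid arrivals clears the middle asks.

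The natural first attempt is the total middle count $M=N_b+N_a$ (bids plus asks resting in bins $2,3,4$). Computing its generator $\mathcal{A}M$ at each of the finitely many interfaces shows $M$ has strictly negative drift whenever both order types are present in the middle: for instance $\mathcal{A}M=-\tfrac25-2\epsilon$ at $(3,4)$ and $\mathcal{A}M=-2\epsilon$ at $(2,4)$. The obstruction comes from the one-sided interfaces in which one type is absent: at the bid-only interfaces one finds $\mathcal{A}M=\tfrac35-3\epsilon>0$ at $(2,5)$ and $\mathcal{A}M=\tfrac15-3\epsilon>0$ at $(3,5)$ (and symmetrically for the ask-only interfaces $(1,2),(1,3)$). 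Thus $M$ is not a Lyapunov function: when no opposite-type orders are present, the slow innermost bids in bin $2$ are \emph{protected} by the faster-clearing bids above them and can accumulate. In fact one can check that \emph{no} linear function $\sum_k u_k x_k+\sum_k v_k y_k$ (with $x_k,y_k$ the bid and ask counts in bin $k$) works for small $\epsilon$: forcing negative drift at both $(2,5)$ and $(3,5)$ imposes the contradictory requirements $u_2\gg u_3$ and $u_3\gg u_2$ (quantitatively, the two inequalities are jointly infeasible once $\epsilon<\tfrac1{10}$). This is the main obstacle, and it reflects a real two-timescale feature of the chain rather than a weakness of the bound.

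To get around it I would argue hierarchically from the top bin down, exploiting that higher bins clear faster. While bin $4$ holds bids we necessarily have $\beta=4$ and $\alpha=5$, so those bids arrive at rate $w_4=\tfrac15-\epsilon$ and depart at rate $w_1+w_2+w_3+w_4=\tfrac45-\epsilon$; hence the bin-$4$ bid count is stochastically dominated by a geometric queue, exactly as in the preceding theorem, and bin $4$ is empty a fraction of time tending to $1$ with exponentially small excursions. Conditioning on these short excursions, bin-$3$ bids (which clear at rate $\tfrac35$ when $\beta=3$ against arrival rate $w_3=\tfrac15$, and are protected only during the rare bin-$4$ excursions) are likewise dominated by a stable queue; and then bin-$2$ bids (clearing at rate $\tfrac25$ when $\beta=2$ against arrival rate $w_2=\tfrac15-\epsilon$) are dominated in turn, with the accumulation during the short protected intervals absorbed by the margin. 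The symmetric statements control the asks. I would then assemble these dominations into a single Lyapunov function adapted to the hierarchy (weights growing toward the reservoirs, made nonlinear to defeat the obstruction above), or equivalently verify a multi-step state-dependent drift condition of Meyn--Tweedie type by observing the chain after enough transitions that an opposite-type order has, with probability bounded below, arrived and converted a positive-drift one-sided interface into a negative-drift two-sided one.

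Once $\mathcal{A}V\le-\delta<0$ holds outside a finite set, Foster's criterion (with the irreducibility and aperiodicity already noted) yields positive recurrence of the middle-three-bin chain, and positive Harris recurrence in the continuous-time formulation; one also recovers geometric tails in the spirit of the preceding theorem. I expect the delicate step to be precisely the handling of the protection coupling at the problematic interfaces $(2,5),(3,5)$ and their mirrors, where the occasional opposite-type arrival must be shown to induce clearing faster than the protected innermost bin fills at rate $\tfrac15-\epsilon$. The bin widths $\tfrac15\pm\epsilon$ are chosen exactly so this margin is positive; the strict inequality $\epsilon>0$ is essential (the drift at $(2,4)$ is $-2\epsilon$, vanishing at $\epsilon=0$, which matches the borderline case of Conjecture~\ref{conj:recurrent}), and tracking the $\epsilon$-dependence through the estimates fixes the admissible range of $\epsilon$.
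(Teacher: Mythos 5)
Your setup coincides with the paper's: the same signed-occupancy chain (each middle bin holds only one order type, so the state lives in $\BZ^3$), the same observation that transition rates depend only on the interface pair $(\beta,\alpha)$ with its ten possible configurations, and the same target of a Foster--Lyapunov drift criterion. Your drift computations for the total count $M$ are correct (e.g.\ $3/5-3\epsilon$ at $(2,5)$, $-2\epsilon$ at $(2,4)$), and your diagnosis that no single linear functional can have negative drift in all regions is consistent with what the paper does. But the heart of the proof is missing. The paper resolves the linear obstruction \emph{constructively}: it defines the piecewise-linear function $\CL(x)=\min_F\langle x,v_F\rangle$ with an explicit normal vector for each interface region --- $v_{++0}=v_{+00}=(4/3,1,2/3)$, $v_{+0-}=(1,-4/5,-9/5)$, $v_{0--}=v_{00-}=(-2,-3,-4)$, and $\pm(1,1,1)$-type vectors on the full orthants --- chooses them so that $\langle\Delta_{\tilde F},v_F\rangle<0$ whenever $\tilde F$ agrees with $F$ at the nonzero places of $F$ (so that each face of the level polyhedron sees negative drift from every region it can meet), and concludes by compactness of the level sets and bounded jumps via the Foster--Lyapunov criterion. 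Your proposal explicitly defers exactly this step (``assemble these dominations into a single Lyapunov function \dots\ or equivalently verify a multi-step \dots\ drift condition''); as written it is a plan for a proof, with the acknowledged delicate part left undone.

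Worse, the hierarchical route you sketch does not close with the estimates you give. The bin-4 bid queue is dominated by a stable queue with load $\rho_4=(1/5-\epsilon)/(4/5-\epsilon)\approx 1/4$, so it is busy a \emph{constant} fraction of time; your assertion that bin 4 ``is empty a fraction of time tending to $1$ with exponentially small excursions'' is wrong --- no limit is being taken, $\epsilon$ is fixed, and the protected intervals are not rare. Feeding honest bounds into your own scheme: bin-2 bids need effective service $(1-T)\cdot\tfrac25>\tfrac15-\epsilon$, i.e.\ the protected fraction $T=\BP(\text{bids present in bin 3 or 4})$ must be below $\tfrac12+\tfrac{5\epsilon}{2}$; but stochastic domination plus a stationarity balance gives only $T\le \rho_4+\BP(\text{bin 3 busy, bin 4 empty})\le \frac{1/5-\epsilon}{4/5-\epsilon}+\frac{1/5}{3/5}\approx\frac{7}{12}>\frac12$. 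So the margin you rely on at the innermost bin is not established; closing it would require exploiting correlations (e.g.\ the time asks spend occupying bins 3--4, which blocks bid accumulation there), which is essentially the original difficulty. The multi-step Meyn--Tweedie variant inherits the same quantitative question: at the one-sided interface $(2,5)$, an ask resting in bin 3 or 4 converts it to a two-sided interface, but that ask is itself consumed by bid arrivals at rate $2/5$ to $3/5$, so whether the time-averaged drift is negative is a nontrivial competition you never resolve. The paper's explicit polyhedral Lyapunov function is precisely the device that settles all of these competitions at once by a finite check of linear inequalities; without it (or a completed substitute), your argument has a genuine gap.
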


\begin{proof}
Let $X_t \in \BZ^3$ be the Markov chain describing their state. We let $\abs{X_t(i)}$ denote the number of orders in bin $i$, and its sign correspond to bids ($+$) or asks ($-$).

The evolution of the system depends on the bins containing the rightmost bid and of the leftmost ask; call these $b_t$ and $a_t$ respectively. There are 10 possible combinations, which we denote $+++$, $++-$, $+--$, $---$, $++0$, $+0-$, $0--$, $+00$, $00-$, and $000$. The signs should be thought of as the signs of $X_t(i)$, although we do not distinguish e.g. $0++$ from $+++$. Note that $000$ corresponds to the middle three bins being empty.

Consider the (vector) drift of $X$, that is, $\BE[X_{t+1} - X_t \vert X_t]$. As we mentioned, this depends only on the region $F$ to which $X_t$ belongs, where $F$ comes from the list of possible descriptions for the pair $(b(t), a(t))$. We will not be interested in the drift when $F=000$. The drifts are as follows:
\begin{align*}
&\Delta_{+++} = (1/5-\epsilon,1/5,-(4/5-\epsilon)), && \Delta_{---} = (4/5-\epsilon,-1/5,-(1/5-\epsilon)),\\
&\Delta_{++-} = (1/5-\epsilon,-3/5,2/5-\epsilon), && \Delta_{+--} = (-(2/5-\epsilon),3/5,-(1/5-\epsilon)),\\
&\Delta_{++0} = (1/5-\epsilon,-3/5,0), && \Delta_{0--} = (0,3/5,-(1/5-\epsilon)),\\
&\Delta_{+0-} = (-(2/5-\epsilon),0,2/5-\epsilon), && \Delta_{+00} = (-(2/5-\epsilon),0,0),\\
&\Delta_{00-} = (0,0,2/5-\epsilon).
\end{align*}

We will now show that $X$ is positive recurrent by constructing a Lyapunov function for it. Note that the jumps of $X$ are bounded by 1. We define $\CL(x)$ by
\[
\CL(x) = \min(\langle x, v_F \rangle), F \in \{++-, +--, ++\,0, +00, +0-, 0--, 00-\}.
\]
We will specify the vectors $v_F$ shortly. The set of possible values of $F$ is the set of relative positions of bid and ask above, except for $000$ (the origin).

The level sets of this Lyapunov function are polyhedra with outer normals $v_F$. The vectors $v_F$ will be picked so that a point $x$ on the face of the level polyhedron with outer normal $v_F$ belongs either to the orthant $F$, or (if $F$ is not an orthant, i.e. if it contains 0) to one of the orthants adjacent to $F$. In this way, it will be sufficient to construct $v_F$ so that $\langle \Delta_{\tilde F}, v_F \rangle < 0$ whenever $\tilde F$ agrees with $F$ at all the nonzero places of $F$. By compactness of the level sets, this will guarantee that
\[
\exists K > 0 \text{ such that } \BE[\CL(X_{t+1}) - \CL(X_t) \vert X_t] < -\epsilon < 0 \text{ whenever $\CL(X_t) > K$}.
\]
Together with the fact that the jumps of $\CL$ are clearly bounded above (because the jumps of $X$ are, and $\CL$ is Lipschitz), this gives the Foster-Lyapunov criterion for positive recurrence as used e.g. in \cite[Proposition 4.4]{Bramson}.

It can be checked that the choice
\begin{align*}
&v_{+++} \equiv (1,1,1), && v_{++-} \equiv (1,1,-1),\\
&v_{+--} \equiv (1,-1,-1), && v_{---} \equiv (-1,-1,-1),\\
&v_{++0} = v_{+00} \equiv \left(4/3,1,2/3\right), &&v_{+0-} \equiv \left(1,-4/5,-9/5\right),\\
&v_{0--} = v_{00-} \equiv (-2,-3,-4).
\end{align*}
satisfies all of these constraints. Figure~\ref{fig:lyapunov} shows the corresponding level set of $\CL$.

\begin{figure}[htbp]
\begin{center}
\includegraphics[width=2in]{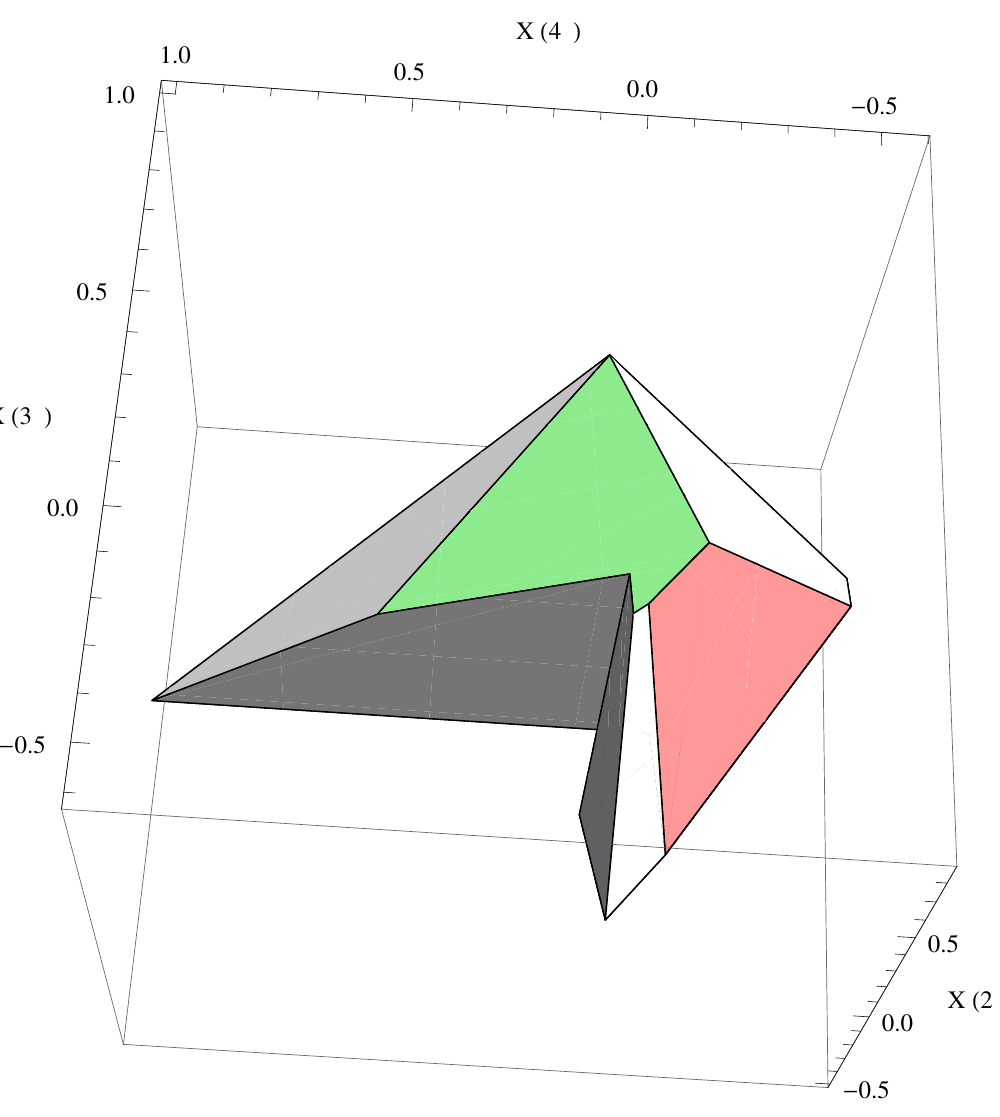} \quad
\includegraphics[width=2in]{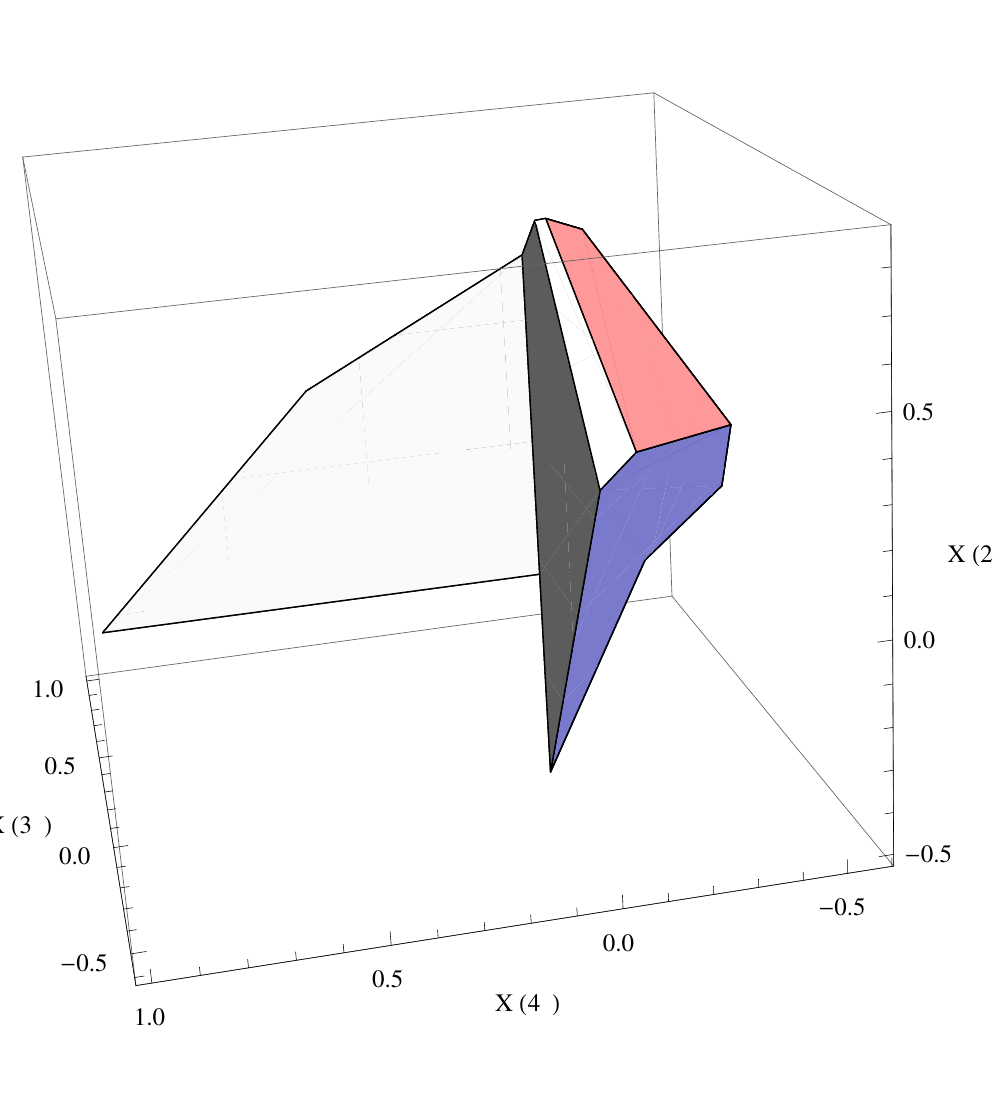}
\caption{Two views of the level set $P \equiv \{x: \CL(x)=1\}$. $P$ has 15 vertices $(0, 0, 0)$, $(0, 1, 0)$, $(0, 0, 1)$, $(\frac12, 0, \frac12)$, $(\frac{45}{58}, \frac{2}{29}, -\frac{9}{58})$, $(\frac67, -\frac17, 0)$, $(\frac{29}{34}, -\frac{2}{17}, -\frac{1}{34})$, $(\frac34, 0, 0)$, $(\frac{11}{50}, \frac{6}{25}, -\frac{27}{50})$, $(0, \frac37, -\frac47)$, $(\frac{11}{26}, -\frac{6}{13}, -\frac{3}{26})$, $(\frac25, -\frac35, 0)$, $(0, -\frac13, 0)$, $(-\frac12, 0, 0)$, $(0, 0, -\frac14)$, and 10 faces (defined as ordered sets of vertices, possibly not all with the same orientation) $\{4, 3, 2\}$, $\{5, 2, 10, 9\}$, $\{7, 6, 12, 11\}$, $\{1, 3, 4, 8\}$, $\{1, 8, 6, 12, 14\}$, $\{1, 3, 2, 10, 15\}$, $\{1, 15, 14\}$, $\{7, 5, 9, 11\}$, $\{2, 4, 8, 6, 7, 5\}$, $\{9, 10, 15, 14, 12, 11\}$.}
\label{fig:lyapunov}
\end{center}
\end{figure}
\end{proof}

This result does not directly translate into a statement about recurrence of an unbinned limit order book. However, it can be used to derive some bounds on $\kappa_b$ and $\kappa_a$ for an ordinary (unbinned) limit order book with uniform arrivals of bids and asks. Analysis similar to that of Lemma~\ref{lm: strict and ordinary} gives, by looking at strict limit order books with 4 bins, $\kappa_b < 1/4$ and $\kappa_a > 3/4$. We do not go into details here, since for this case we have already computed the value of $\kappa_b$ precisely in Section~\ref{section:pictures}.

One further indication that the positive Harris recurrence holds is obtained by plotting the empirical density of the joint location of the highest bid and the lowest ask. Figure~\ref{fig: joint} presents the plots obtained by simulation. The plots suggest that there is a limiting surface describing the joint density, although we have been unable to obtain an expression for it.

\begin{figure}[htbp]
\begin{center}
\includegraphics[width=3in]{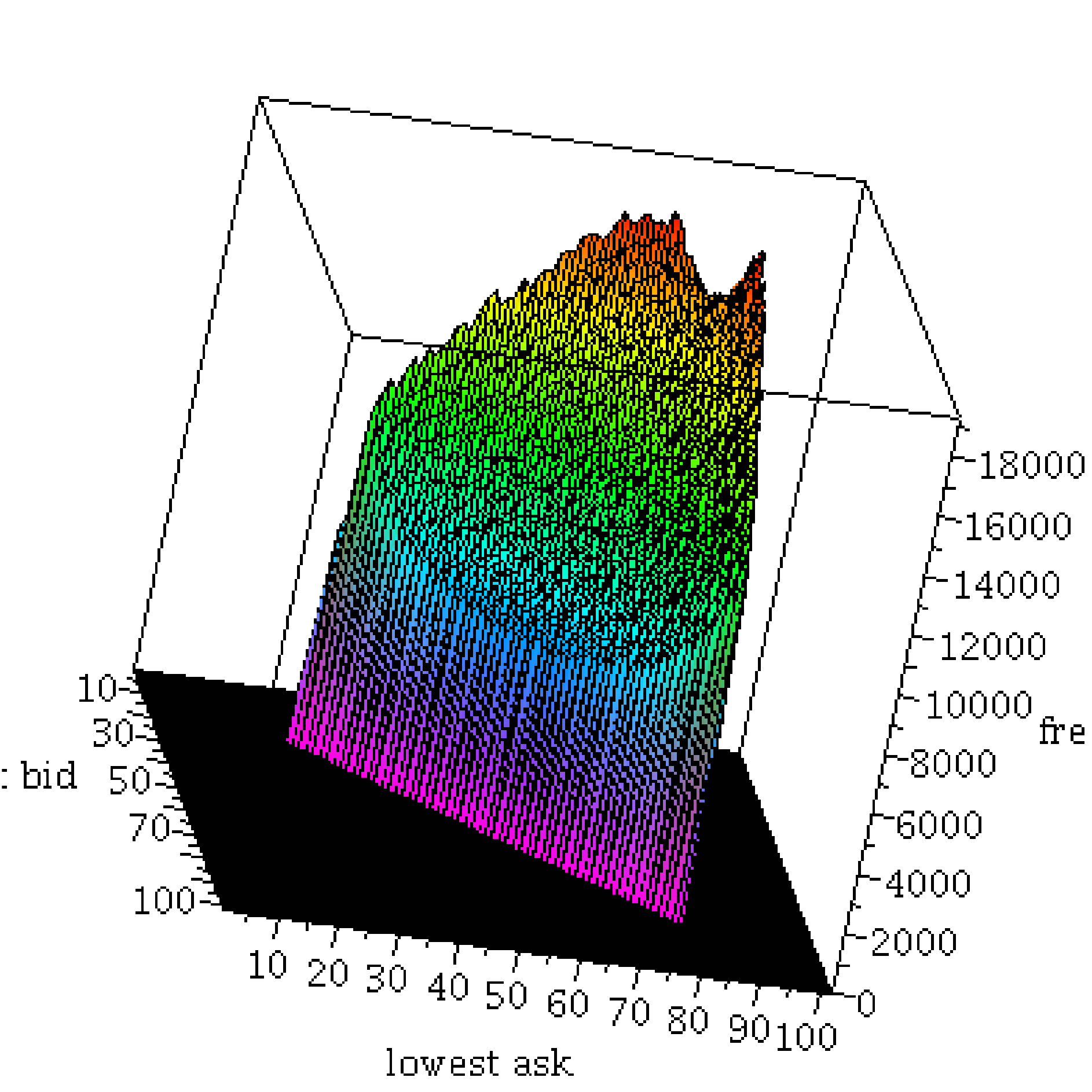} ~
\caption{Joint distribution of the rightmost bid and the leftmost ask.}
\label{fig: joint}
\end{center}
\end{figure}

\section{Discussion and future work}\label{section: future}
We have presented a model of a limit order book applicable on relatively short time scales, during which the price is relatively stationary, order arrivals can be modeled as having distributions independent of the state of the limit order book, and abandonments can be ignored (either as background noise or as orders that are never executed). We have obtained a phase transition, which suggests that at such a time orders in a narrow band around the price get executed (more and more rarely as we move away from the price), while orders farther away from the price will never be reached. We have determined this band in terms of the distribution of the prices of arriving orders. Modulo a conjectural result on recurrence, we have also determined how frequently orders at a given distance away from the optimal price get executed.

There are two major directions in which this work could be extended.

First, it would be desirable to understand finer features of the model. In particular, we would like to gain an understanding of the joint distribution of the highest bid and lowest ask, which would also allow us to understand the distribution of the bid-ask spread in this model. It would also be interesting to examine the limiting shape of the book. Note that the distribution of the rightmost bid gives an indication of the expected number of bids and asks at any given price, but we would like to understand the following: conditional on $\beta_t$ being in bin $b_t$ and $\alpha_t$ being in bin $a_t$, what is the number of bids in bin $b_t - k$? What is the number of asks in bin $a_t + k$? Can we reproduce some of the empirical results concerning these shapes?

The left-hand side of Figure~\ref{fig:numTopBids}, we present the number of bids in the bin containing the highest bid, as a function of time. The occasional spikes are not surprising, because we expect the highest bid to occasionally be in the threshold bin, which contains large numbers of orders. The right-hand side presents the average number of bids at and near the bid price, \emph{when the bid price is high}. We see that there often are no bid orders near the highest waiting bid, when the arrival distribution is uniform. This may, however, change if we consider different arrival distributions.

\begin{figure}[tbhp]
\begin{center}
\includegraphics[height=1.8in]{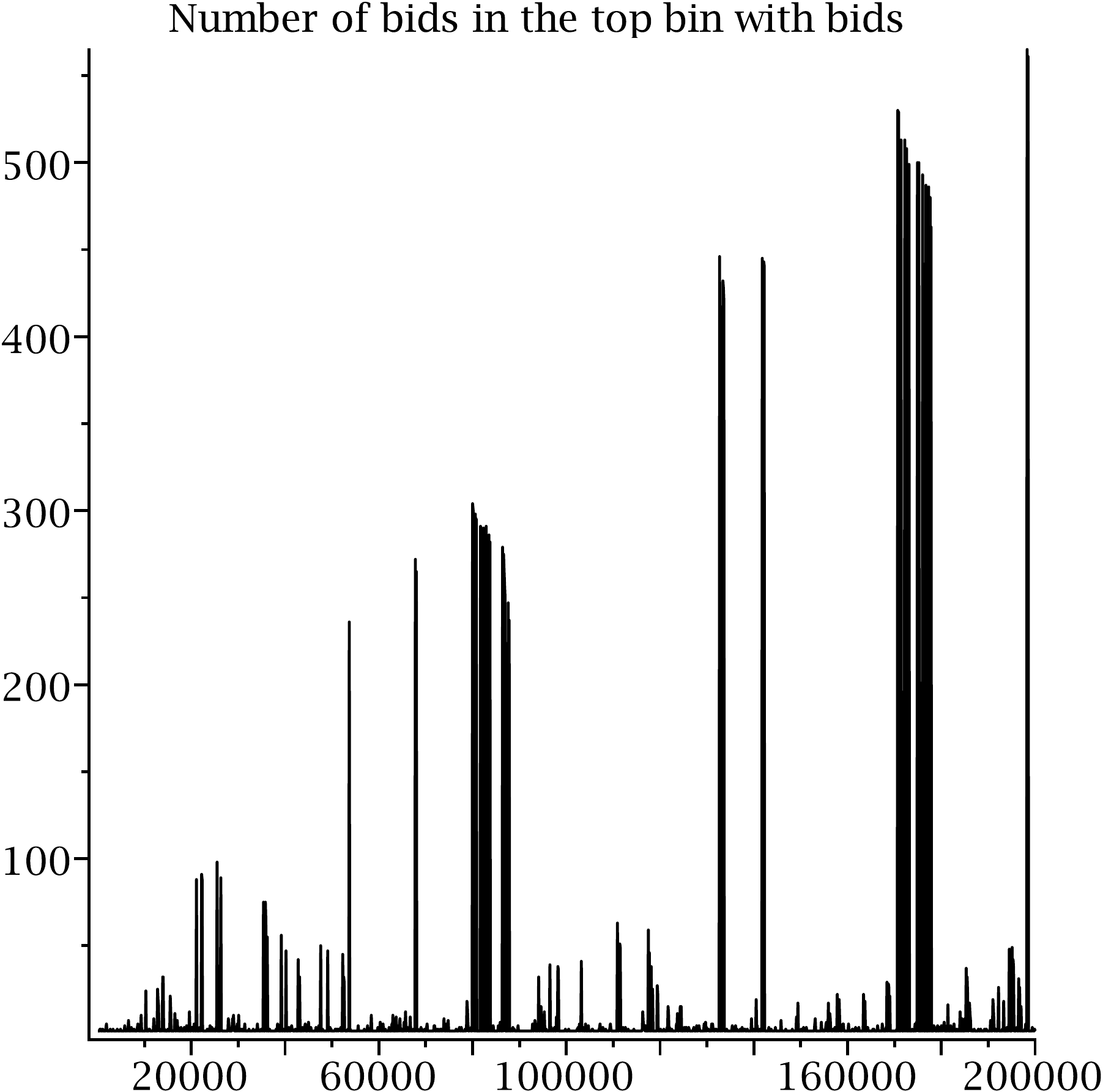}~
\includegraphics[height=1.8in]{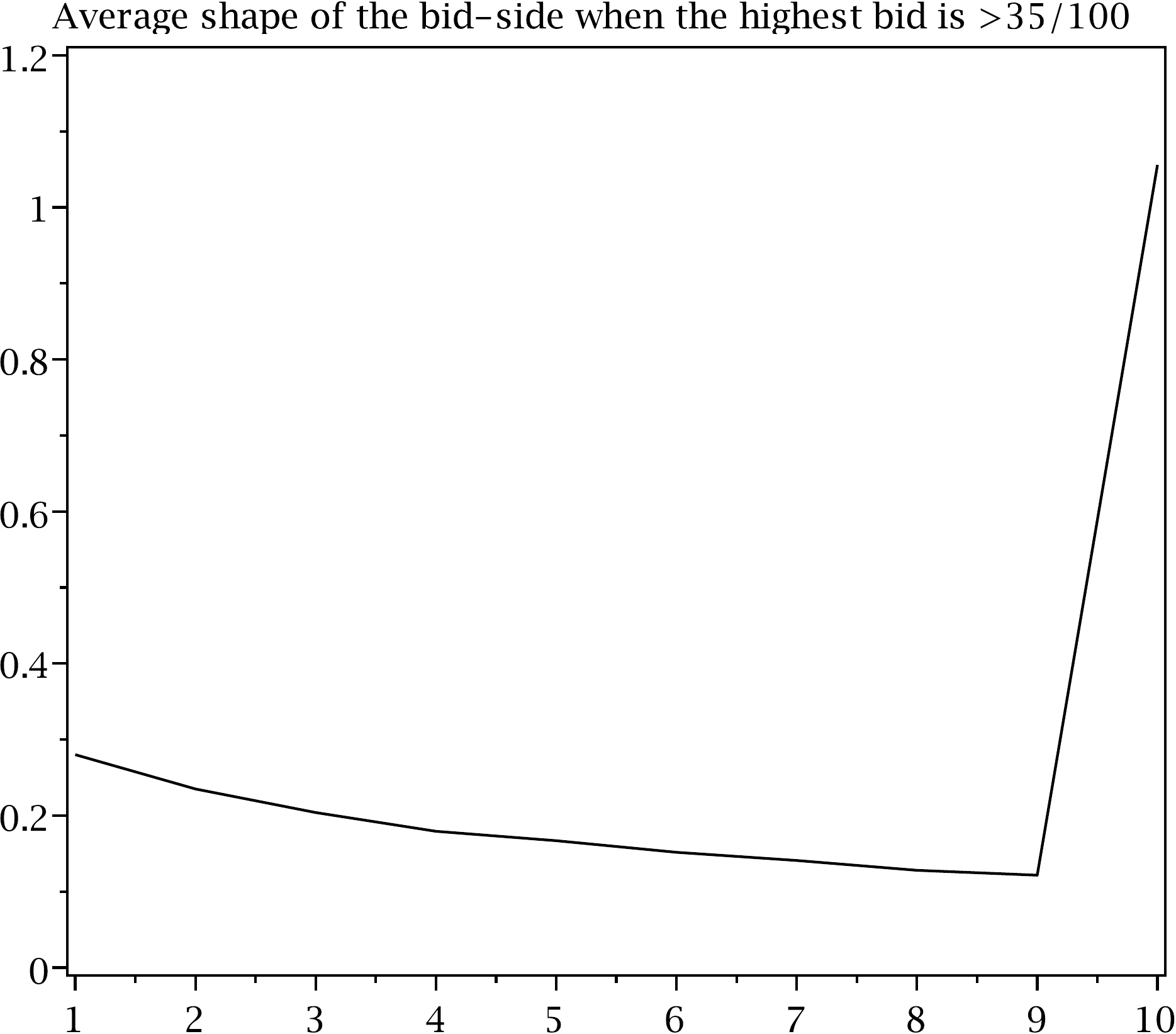}
\caption{The number of bids in the bin with the highest bid; the average shape of the bid side of the limit order book when the bid price is high.}
\label{fig:numTopBids}
\end{center}
\end{figure}

Second, it would be desirable to change the model so that it would more closely agree with actual limit order books. In particular, we would like to be able to incorporate order abandonment, non-unit-sized orders, and order distributions that depend on the price. Order abandonment fundamentally changes our analysis by removing the hard phase transition and getting rid of monotonicity properties; it is likely that it requires a different formulation of the model. However, different arrival distributions may be amenable to analysis once we have a better understanding of the shape of the limit order book around its price.

\acks
The author would like to thank Yuri Suhov and Frank Kelly for introducing her to limit order books and for many insightful conversations, and Vlada Limic and Florian Simatos for interesting conversations about this and related models. Special thanks to Daniel Whalen for help generating the images in Figure~\ref{fig:lyapunov}.

This material is based upon work supported by the National Science Foundation Graduate Research Fellowship and Award No. 1204311.

\bibliographystyle{apt}
\bibliography{LOB}

\end{document}